\numberwithin{equation}{section}
\newtheorem{theorem}{Theorem}[section]
\newtheorem{lemma}[theorem]{Lemma}
\newtheorem{definition}[theorem]{Definition}
\newtheorem{remark}[theorem]{Remark}
\begin{document}
	
	\title[Weak solutions to a parabolic nonlocal 1-Laplacian equation] {Existence and uniqueness of weak solutions to a parabolic nonlocal 1-Laplacian equation}
	
	\author[D. Li and C. Zhang  \hfil \hfilneg]
	{Dingding Li and Chao Zhang$^*$}
	
	\thanks{$^*$ Corresponding author.}
	
	\address{Dingding Li \hfill\break School of Mathematics, Harbin Institute of Technology, Harbin 150001, China}
	\email{a87076322@163.com}

	\address{Chao Zhang  \hfill\break School of Mathematics and Institute for Advanced Study in Mathematics, Harbin Institute of Technology, Harbin 150001, China}
	\email{czhangmath@hit.edu.cn}

	\subjclass[2020]{35D30, 35R11}
	\keywords{Weak solutions; existence; nonlocal $1$-Laplace operator; singular parabolic problem}
	
	\maketitle
	
\begin{abstract}
We consider a class of parabolic nonlocal  $1$-Laplacian equation
\begin{align*}
u_t+(-\Delta)^s_1u=f \quad \text{ in }\Omega\times(0,T].
\end{align*}
By employing the Rothe time-discretization method, we establish the existence and uniqueness of weak solutions to the equation above. In particular, different from the previous results on the local case, we infer that the weak solution maintains $\frac{1}{2}$-H\"{o}lder continuity in time.
\end{abstract}

\section{Introduction}
\thispagestyle{empty}
\label{sec1}

Suppose that $\Omega$ is a bounded domain of $\mathbb{R}^N(N\ge 2)$ with smooth boundary $\partial\Omega$ and $T$ is a positive number. Denote $\Omega_T:=\Omega\times (0, T]$. In this paper, we deal with the following fractional 1-Laplacian problem
\begin{align}
	\label{1.1}
	\begin{cases}
		u_t+\left( -\Delta\right)^s_1u=f &\text{in }\Omega_T,\\
		u(x,0)=u_0 &\text{on }\Omega,\\
		u=0 &\text{in }\mathcal{C}_\Omega\times(0,T],
	\end{cases}
\end{align}
where $\mathcal{C}_\Omega:=\mathbb{R}^N\backslash\Omega$. The operator $(-\Delta)^s_p$, called the fractional $p$-Laplacian, is given by
\begin{align*}
	&\left( -\Delta\right)^s_pu(x)=2\text{P.V.}\int_{\mathbb{R}^N}\frac{\left| u(x)-u(y)\right|^{p-2}\left( u(x)-u(y)\right)  }{|x-y|^{N+sp}}dy, \quad x\in \Omega
\end{align*}
with $p\in \left[ 1,+\infty\right) $ and $s\in(0,1)$. P.V. is known as an abbreviation in the sense of principal value.

Problem \eqref{1.1} could be seen as a nonlocal counterpart of the following 1-Laplacian Dirichlet problem
\begin{align}
	\label{1.2}
	\begin{cases} 
		\displaystyle
		u_t-\text{div}\left(\frac{\nabla u}{|\nabla u|}\right) =f &\text{in }\Omega_T,\\
		u(x,0)=u_0 &\text{on }\Omega,\\
		u=0 &\text{on }\partial\Omega\times(0,T].
	\end{cases}
\end{align}
As to the homogeneous case that $f\equiv0$ in $\Omega_T$, Andrew, Ballester, Caselles and Maz\'{o}n in \cite{ABCM01} introduced the concept of solutions to make clear the quotient $\frac{Du}{|Du|}$, even if $Du$ vanishes. Moreover, they used nonlinear semigroup theory to prove the existence and uniqueness of solutions and found a vector field $Z\in L^\infty(\Omega,\mathbb{R}^N)$, satisfying $\|Z\|_{L^\infty(\Omega,\mathbb{R}^N)}\le 1$ and $(Z, Du)=|Du|$ as measures (see also \cite{ACM04}). For the inhomogeneous case, the nonlinear semigroup theory was also utilized in \cite{T11} when $f\in L^2(\Omega_T)$. Under the assumption that the source $f$ belongs to $L^1_{\text{loc}}(0,+\infty, L^2(\Omega))$, Latorre and Segura de Le\'{o}n demonstrated the existence and uniqueness of solutions in \cite{LS22}, which retrieved the results of \cite{SW15} in a correct way. Additional results concerning problem \eqref{1.2} can be found in \cite{BDM15, BDS16, BDSS19}.

Regarding the nonlocal case, it is worth mentioning that the definition of solutions to \eqref{1.1} was originally given by Maz\'{o}n, Rossi and Toledo in \cite{MRT16}, where they showed that a suitable function $Z\in L^\infty\left( \mathbb{R}^N\times \mathbb{R}^N\times [0, T]\right) $ should be found to interpret and describe the fractional ratio $\frac{u(x,t)-u(y,t)}{|u(x,t)-u(y,t)|}$ and studied the existence of solutions to homogeneous parabolic fractional 1-Laplacian equation. Different from the local case, the function $Z$ describes $\frac{u(x,t)-u(y,t)}{|u(x,t)-u(y,t)|}$ in the set $\left\lbrace \mathbb{R}^N\times\mathbb{R}^N\times[0,T]\big|u(x,t)\neq u(y,t)\right\rbrace $ precisely and $Z\in [-1,1]$ in the set $\left\lbrace \mathbb{R}^N\times\mathbb{R}^N\times[0,T]\big|u(x,t)= u(y,t)\right\rbrace $. In terms of elliptic equations, the existence of $(s,1)$-harmonic function was studied in \cite{BU21}. In addition, by imposing the condition that $f$ belongs to $L^{\frac{N}{s}}(\Omega)$ and its norm is sufficiently small, Bucur investigated in \cite{BU23} the minimizer of the following functional
\begin{align*}
	\mathcal{F}^s_1(u):=\frac{1}{2}\int_{\mathbb{R}^N\times\mathbb{R}^N}\frac{|u(x)-u(y)|}{|x-y|^{N+s}}dxdy-\int_{\Omega}fudx
\end{align*}
and showed that this minimizer corresponds to the weak solution of the nonlocal 1-Laplacian problem. On the other hand, when $\|f\|_{L^\frac{N}{s}(\Omega)}>\frac{1}{S_{N,s}}$ the energy could be unbounded and the minimizer may not exist in this case. What's more, the flatness results for the minimizer were obtained, which implies that the expression of $Z$ towards $\frac{u(x)-u(y)}{|u(x)-u(y)|}$ is not precise. We also refer to \cite{NO23} for an interesting result, in which Novage and Onoue studied the minimizers of a nonlocal variational problem concerning the image-denoising model
\begin{align*}
	\text{min}\left\lbrace \mathcal{F}_{K,f}(u)\big|u\in BV_K(\mathbb{R}^N)\cap L^2(\mathbb{R}^N)\right\rbrace ,
\end{align*}
where $\mathcal{F}_{K,f}$ is given by
\begin{align*}
	\mathcal{F}_{K,f}(u):=\frac{1}{2}\int_{\mathbb{R}^N\times\mathbb{R}^N}K(|x-y|)|u(x)-u(y)|dxdy+\frac{1}{2}\int_{\mathbb{R}^N}\left( u(x)-f(x)\right)^2dx 
\end{align*}
with $K$ being a kernel singular at the origin. A typical example is that $K(x)=\frac{1}{|x|^{N+s}}$. In two dimensions, by assuming the original image is locally $\beta$-H\"{o}lder continuous with $\beta\in (1-s,1]$, they showed the minimizers have the same local H\"{o}lder regularity.

Finally, we would like to turn to the method of Rothe time-discretization, also called the minimizing movements method or difference method, which is usually used to prove the existence of solutions to parabolic problems. We refer to \cite{WZ06, XZ07, CZ09, ZZ10} for the application in terms of showing the existence of weak solutions of nonlinear parabolic problems and refer to \cite{SS23} for the application in terms of finding variational solutions.

Motivated by the results mentioned above, our main aim in the present paper is to study the existence and uniqueness of weak solutions to the nonlocal parabolic 1-Laplacian problem \eqref{1.1}. Before we begin our proof of the main result, it is crucial to demonstrate the equivalence between the minimizers and the weak solutions to the following problem
\begin{align}
	\label{1.3}
	\begin{cases}
		\dfrac{u-u_0}{h}+\left( -\Delta\right)^s_1u=f &\text{in }\Omega,\\
		u=0 &\text{in }\mathcal{C}_\Omega,
	\end{cases}
\end{align}
where $f\in L^2(\Omega)$. However, when dealing with problem \eqref{1.1}, it is necessary to assume $f\in L^\frac{N}{s}(\Omega_T)$ to show that the weak solution belongs to $L^\infty(0,T;L^2(\Omega))$. Similar to the method of difference, a function $u$ could be found and proved to be the solution of \eqref{1.1} and a function $Z$ may be found to replace $\frac{u(x,t)-u(x,y)}{|u(x,t)-u(y,t)|}$ in the weak formulation.

From the purely mathematical point of view, the super-linear growth condition on the operator seems to be of great significance because a convergence result is indispensable (see \cite[Lemma 2.5]{ZZ10} and \cite[Lemma 2.6]{CZ09}). In the context of the $1$-Laplacian equation, diverging from the classic approach, we have to use an alternative method to derive the convergence result concerning $u$, provided that the initial value $u_0$ possesses finite energy. In other words, $u_0$ not only should belong to $L^2(\Omega)$, but also should lie in the fractional Sobolev space $W^{s,1}_0(\Omega)$. Additionally, it should be emphasized that based on the uniform boundedness result, one can expect $u$ to be $\frac{1}{2}$-H\"{o}lder continuous in time.

Consider the set-value sign function $\text{sgn}$ given by
\begin{align*}
	\text{sgn}(s)=
	\begin{cases}
		1&\text{if }s>0,\\
		[-1,1]&\text{if }s=0,\\
		-1&\text{if }s<0.
	\end{cases}
\end{align*}
We introduce the weak solution to such a singular parabolic problem.
\begin{definition}
	\label{def1.1}
	A function $u\in C^{0,\frac{1}{2}}\left( [0,T]; L^2(\Omega)\right) \cap L^\infty( 0,T;W^{s,1}_0(\Omega))  $ is said to be a weak solution to problem \eqref{1.1} if there exists a function $Z\in L^\infty(\mathbb{R}^N\times\mathbb{R}^N\times[0,T])$ satisfying:
	\begin{itemize}
		\item[(1)] For all $\varphi\in C^1([0, T]; L^2(\Omega))\cap C([0, T]; W^{s, 1}_0(\Omega))$ with $\varphi(\cdot, T)=0$, there holds
		\begin{align}
			\label{1.4}
			\begin{aligned}
				&\quad-\int_{\Omega}u_0\varphi(x, 0)dx-\int_{0}^{T}\int_{\Omega}u\varphi_tdxdt+\int_{0}^{T}\int_{\mathbb{R}^N\times\mathbb{R}^N}Z\frac{\varphi(x,t)-\varphi(y,t)}{|x-y|^{N+s}}dxdydt\\
				&=\int_{0}^{T}\int_{\Omega}f\varphi dxdt;
			\end{aligned}
		\end{align}
		\item[(2)] $Z(x,y,t)\in \mathrm{sgn}(u(x, t)-u(y, t))$ is an anti-symmetry function with respect to $(x,y)$. 
	\end{itemize}
\end{definition}
\begin{remark}
	\label{rem1.2}
	By an approximation argument, the weak solution $u$ could be chosen as a test function in \eqref{1.4}.
\end{remark}

We are now in a position to state the main result as follows.

\begin{theorem}
	\label{the1.1}
	Under the assumption that $f\in L^\frac{N}{s}(\Omega_T)$ and $u_0\in L^2(\Omega)\cap W^{s,1}_0(\Omega)$, there exists a weak solution to problem \eqref{1.1} in the sense of Definition \ref{def1.1}.
\end{theorem}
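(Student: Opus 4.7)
The plan is to construct approximate solutions via Rothe time-discretization. Fix $n\in\mathbb{N}$, set $h=T/n$, define the time-averaged data $f^k(x):=\frac{1}{h}\int_{(k-1)h}^{kh}f(x,t)\,dt$ for $k=1,\dots,n$, and set $u^0:=u_0$. Recursively, having $u^{k-1}\in L^2(\Omega)\cap W^{s,1}_0(\Omega)$, I would obtain $u^k$ as the minimizer over $L^2(\Omega)\cap W^{s,1}_0(\Omega)$ of
\begin{align*}
J^k(v):=\frac{1}{2h}\int_\Omega (v-u^{k-1})^2\,dx+\frac{1}{2}\int_{\mathbb{R}^N\times\mathbb{R}^N}\frac{|v(x)-v(y)|}{|x-y|^{N+s}}\,dxdy-\int_\Omega f^k v\,dx.
\end{align*}
Existence of the minimizer follows from coercivity (using the fractional Sobolev embedding $W^{s,1}_0(\Omega)\hookrightarrow L^{N/(N-s)}(\Omega)$ paired with $f^k\in L^{N/s}(\Omega)$) and lower semicontinuity of the Gagliardo seminorm. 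Using the announced equivalence between minimizers and weak solutions of \eqref{1.3}, there exists an anti-symmetric $Z^k\in L^\infty(\mathbb{R}^N\times\mathbb{R}^N)$ with $Z^k\in\mathrm{sgn}(u^k(x)-u^k(y))$ that satisfies the Euler–Lagrange identity associated to \eqref{1.3}.

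Next I would prove uniform estimates on the piecewise constant time-interpolant $u_h(x,t):=u^k(x)$ for $t\in((k-1)h,kh]$. Testing the discrete equation with $u^k$ and telescoping yields
\begin{align*}
\frac{1}{2}\|u^k\|_{L^2}^2+\sum_{j=1}^k h\,[u^j]_{W^{s,1}}\leq \frac{1}{2}\|u_0\|_{L^2}^2+\sum_{j=1}^k h\int_\Omega f^j u^j\,dx,
\end{align*}
and the embedding argument controls the right-hand side by $\|f\|_{L^{N/s}(\Omega_T)}$ times $\sum h\,[u^j]_{W^{s,1}}$, absorbed after smallness rearrangement. This gives $u_h$ uniformly bounded in $L^\infty(0,T;L^2(\Omega))\cap L^\infty(0,T;W^{s,1}_0(\Omega))$. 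For the $\frac{1}{2}$-Hölder estimate in time, I would test with $u^k-u^{k-1}$ and exploit the convexity of the Gagliardo functional to obtain
\begin{align*}
\frac{1}{h}\|u^k-u^{k-1}\|_{L^2}^2\leq [u^{k-1}]_{W^{s,1}}-[u^k]_{W^{s,1}}+\int_\Omega f^k(u^k-u^{k-1})\,dx;
\end{align*}
summing over consecutive indices and applying Cauchy–Schwarz (crucially using that the initial energy $[u_0]_{W^{s,1}}$ is finite) yields $\|u^k-u^{k-1}\|_{L^2}^2\leq C(k-\ell)h$ for $k>\ell$, hence the $\frac{1}{2}$-Hölder continuity in time at the discrete level.

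Then I would pass to the limit $h\to 0^+$. The uniform bounds combined with the Hölder-in-time estimate and the compact embedding $W^{s,1}_0(\Omega)\hookrightarrow\hookrightarrow L^2(\Omega)$ yield, via Arzelà–Ascoli in $C([0,T];L^2(\Omega))$, a subsequence converging to $u\in C^{0,1/2}([0,T];L^2(\Omega))\cap L^\infty(0,T;W^{s,1}_0(\Omega))$. Since $\|Z^k\|_{L^\infty}\leq 1$, a weak-$*$ limit $Z\in L^\infty$ exists, and anti-symmetry is preserved. Passing to the limit in the piecewise interpretation of the discrete Euler–Lagrange equation produces \eqref{1.4}, since the discrete time derivative $\frac{u^k-u^{k-1}}{h}$ converges in a distributional sense against a smooth test function by summation by parts.

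The main obstacle is the identification $Z(x,y,t)\in\mathrm{sgn}(u(x,t)-u(y,t))$, because the multivalued map $\mathrm{sgn}$ is discontinuous and weak-$*$ convergence does not by itself see the almost-everywhere pointwise sign structure. The plan here is a Minty-type energy argument: by the $L^1$ lower semicontinuity of the Gagliardo functional,
\begin{align*}
\int_0^T\frac{1}{2}\int_{\mathbb{R}^N\times\mathbb{R}^N}\frac{|u(x,t)-u(y,t)|}{|x-y|^{N+s}}\,dxdy\,dt\leq\liminf_{h\to 0}\int_0^T\frac{1}{2}[u_h]_{W^{s,1}}\,dt,
\end{align*}
while testing the discrete equation with $u^k$ and using Remark~\ref{rem1.2} on the limit side gives the reverse inequality
\begin{align*}
\limsup_{h\to 0}\int_0^T[u_h]_{W^{s,1}}\,dt\leq\int_0^T\int_{\mathbb{R}^N\times\mathbb{R}^N}Z(x,y,t)\frac{u(x,t)-u(y,t)}{|x-y|^{N+s}}\,dxdy\,dt.
\end{align*}
Since $|Z|\leq 1$ forces the pointwise inequality $Z(x,y,t)(u(x,t)-u(y,t))\leq |u(x,t)-u(y,t)|$, equality must hold almost everywhere, which is exactly the selection condition $Z\in\mathrm{sgn}(u(x,t)-u(y,t))$. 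Combining these steps completes the existence proof.
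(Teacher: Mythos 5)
Your overall strategy coincides with the paper's: a Rothe scheme whose step problems are solved by minimizing a convex functional, identification of the minimizers with weak solutions of \eqref{1.3} (hence the existence of the selections $Z^k$), uniform $L^\infty(0,T;L^2(\Omega)\cap W^{s,1}_0(\Omega))$ bounds using $W^{s,1}_0(\Omega)\hookrightarrow L^{\frac{N}{N-s}}(\Omega)$ against $f\in L^{\frac{N}{s}}$, a discrete $\tfrac12$-H\"older-in-time bound obtained by comparing $u^k$ with the competitor $u^{k-1}$ (this is exactly the paper's use of $J^{s,1}_k(u_k)\le J^{s,1}_k(u_{k-1})$), weak-$*$ compactness for $Z$, and a Minty-type energy comparison to force $Z\in\mathrm{sgn}(u(x,t)-u(y,t))$, which mirrors the paper's combination of \eqref{4.19} and \eqref{4.22}.

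There is, however, one step that fails as written: you invoke ``the compact embedding $W^{s,1}_0(\Omega)\hookrightarrow\hookrightarrow L^2(\Omega)$'' to run Arzel\`a--Ascoli in $C([0,T];L^2(\Omega))$. For $N\ge 2$ and $s\in(0,1)$ the critical exponent of $W^{s,1}$ is $1^*=\frac{N}{N-s}<2$, so $W^{s,1}_0(\Omega)$ does not even embed continuously into $L^2(\Omega)$, let alone compactly; your uniform bounds give spatial precompactness only in $L^q(\Omega)$ with $q<2$, which is not enough to conclude strong convergence in $C([0,T];L^2(\Omega))$ by equicontinuity plus pointwise-in-time compactness. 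The conclusion you need is still reachable, but by the paper's mechanism rather than yours: the estimate $\sum_k h^{-1}\|u^k-u^{k-1}\|_{L^2(\Omega)}^2\le C$ (which you already derive) bounds the piecewise \emph{linear} interpolant in $W^{1,2}(0,T;L^2(\Omega))$, and the $\tfrac12$-H\"older estimate then transfers to the limit through weak convergence and weak lower semicontinuity of the $L^2$-norm, while weak/weak-$*$ convergences suffice for the linear terms in \eqref{1.4} and lower semicontinuity of the Gagliardo energy and of $\|u^m(T)\|_{L^2(\Omega)}$ suffices for the identification of $Z$; no compactness into $L^2(\Omega)$ should be claimed. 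A minor further point: with your normalization of the step functional (coefficient $\tfrac12$ on the Gagliardo seminorm) the Euler--Lagrange equation is $\frac{u^k-u^{k-1}}{h}+\tfrac12(-\Delta)^s_1u^k=f^k$ under the paper's convention for $(-\Delta)^s_1$ (which carries a factor $2$), so the coefficient must be adjusted for the discrete equation to match Definition \ref{def2.2}.
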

This paper is organized as follows. In Section \ref{sec2}, we state some notations regarding the fractional Sobolev spaces and give some crucial lemmas. Section \ref{sec3} is devoted to proving that the minimizers of a functional are equivalent to the weak solutions to problem \eqref{1.3}. We will prove our main result in Section \ref{sec4}.

\section{Preliminaries}
\label{sec2}
In this section, we introduce some notations that will be used later. We say that $\left\lbrace u_p\right\rbrace $ is a sequence and consider the subsequence of it. We denote by $|z|$ the Euclidean norm of $z\in \mathbb{R}^N$. If not otherwise specified, we denote by $C$ several constants whose value may change from line to line even if in the same inequality.

For a function $f\in L^q(\Omega_T)$ with $q\in[1, +\infty)$, we denote by 
\begin{align*}
	[f]_h(x, t):=\frac{1}{h}\int_{t}^{t+h}f(x, \mu)d\mu
\end{align*}
the Steklov average of $f$ and we have the following conclusion.
\begin{lemma}
	\label{lem2.0}
	Suppose a function $f$ belongs to $L^q(\Omega_T)$ with $q\in [1,+\infty)$, there holds
	\begin{align*}
		[f]_h\rightarrow f \quad \text{in } L^q(\Omega_T)\quad(h\rightarrow0).
	\end{align*}
\end{lemma}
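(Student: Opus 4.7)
The plan is to follow the standard two-step recipe for mollifier-type convergence results: (i) establish a uniform operator bound $\|[f]_h\|_{L^q(\Omega_T)} \le \|f\|_{L^q(\Omega_T)}$, and (ii) verify pointwise/uniform convergence on a dense subclass, then interpolate.

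First, I would extend $f$ by zero for $\mu \notin [0,T]$ so that $[f]_h(x,t)$ is well defined on all of $\Omega_T$. Applying Jensen's inequality to the convex function $r \mapsto r^q$ (valid since $q \ge 1$) yields
\begin{equation*}
|[f]_h(x,t)|^q \le \frac{1}{h} \int_t^{t+h} |f(x,\mu)|^q\, d\mu.
\end{equation*}
Integrating over $\Omega_T$ and swapping the order of integration in $(t,\mu)$ via Fubini, each value $|f(x,\mu)|^q$ is weighted by $\frac{1}{h}\,|\{ t \in (0,T) : t < \mu < t+h \}| \le 1$, whence
\begin{equation*}
\|[f]_h\|_{L^q(\Omega_T)}^q \le \|f\|_{L^q(\Omega \times (0,T+h))}^q = \|f\|_{L^q(\Omega_T)}^q,
\end{equation*}
using the zero extension. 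This is the desired contraction.

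Second, for $g \in C_c(\Omega_T)$, uniform continuity of $g$ in the $t$-variable gives $[g]_h \to g$ uniformly on $\Omega_T$; since $|\Omega_T| < \infty$, uniform convergence upgrades at once to convergence in $L^q(\Omega_T)$. To conclude for general $f \in L^q(\Omega_T)$, I would use density of $C_c(\Omega_T)$ in $L^q(\Omega_T)$: fix $\varepsilon > 0$, choose $g \in C_c(\Omega_T)$ with $\|f - g\|_{L^q(\Omega_T)} < \varepsilon/3$, and write
\begin{equation*}
\|[f]_h - f\|_{L^q(\Omega_T)} \le \|[f-g]_h\|_{L^q(\Omega_T)} + \|[g]_h - g\|_{L^q(\Omega_T)} + \|g - f\|_{L^q(\Omega_T)}.
\end{equation*}
The first term is bounded by $\|f - g\|_{L^q(\Omega_T)} < \varepsilon/3$ by the contraction estimate; the third is again $< \varepsilon/3$; the middle term tends to $0$ as $h \to 0$ by the $C_c$ case. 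Choosing $h$ small enough gives $\|[f]_h - f\|_{L^q(\Omega_T)} < \varepsilon$.

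I do not anticipate any genuine obstacles — this is a textbook approximation result. The only subtlety is bookkeeping near the right endpoint $t = T$, which is why I would adopt the zero-extension convention from the outset so that $[f]_h$ is unambiguously defined on all of $\Omega_T$ and the Fubini computation in step one gives the clean bound by $\|f\|_{L^q(\Omega_T)}$.
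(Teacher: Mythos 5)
Your argument is correct and complete: the Jensen--Fubini contraction $\|[f]_h\|_{L^q(\Omega_T)}\le\|f\|_{L^q(\Omega_T)}$ (under the zero-extension convention), uniform convergence of $[g]_h\to g$ for $g$ continuous with compact support, and the three-term splitting together give Lemma \ref{lem2.0}. There is, however, nothing in the paper to compare against: the authors state this lemma without proof, as a classical property of Steklov averages, and yours is exactly the standard textbook argument that such a citation tacitly invokes, so it fills the omission appropriately. The only point worth flagging is the endpoint convention: the usual formulation of this result asserts convergence in $L^q\left(\Omega\times(0,T-\delta)\right)$ for every $\delta>0$, because $[f]_h(\cdot,t)$ is only intrinsically defined for $t\le T-h$; your zero extension of $f$ beyond $t=T$ makes the statement on all of $\Omega_T$ both meaningful and true (the strip $(T-h,T)$ is handled automatically by the density argument), and it is consistent with how the lemma is used in Section \ref{sec4}, where the Steklov averages are only evaluated at times $(k-1)h$ with $kh\le T$.
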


For $0<s<1$ and $1\le p<+\infty$, the fractional Sobolev space $W^{s,p}(\mathbb{R}^N)$ is defined as
\begin{align*}
	\left\lbrace 
	u\in L^p(\mathbb{R}^N)\bigg|[u]_{W^{s,p}(\mathbb{R}^N)}:=\left( \int_{\mathbb{R}^N\times\mathbb{R}^N}\frac{|u(x)-u(y)|^p}{|x-y|^{N+sp}}dxdy\right)^\frac{1}{p}<+\infty 
	\right\rbrace .
\end{align*}
For a smooth bounded domain $\Omega\subset\mathbb{R}^N$, we denote by $W^{s,p}_0(\Omega)$ a fractional Sobolev space, which is
\begin{align*}
	W^{s,p}_0(\Omega):=
	\left\lbrace 
	u\in W^{s,p}(\mathbb{R}^N)\big|u=0\text{ in }\mathbb{R}^N\backslash\Omega 
	\right\rbrace.
\end{align*}
We refer to \cite{DPV12} for more information about those fractional Sobolev spaces and emphasize that the space $L^2(\Omega)\cap W^{s,p}_0(\Omega)$ is a closed and convex set.

Under the fractional framework, we define the following functionals
\begin{align*}
	\Phi^s_p(u):=\frac{1}{2p}\int_{\mathbb{R}^N\times\mathbb{R}^N}\frac{|u(x)-u(y)|^p}{|x-y|^{N+sp}}dxdy,
\end{align*}
and 
\begin{align*}
	J^s_p(u):=\frac{2}{hp}\int_{\mathbb{R}^N\times\mathbb{R}^N}\frac{|u(x)-u(y)|^p}{|x-y|^{N+sp}}dxdy+\int_{\Omega}\left( \frac{u-u_0}{h}-f\right)^2dx, 
\end{align*}
where $0<s<1$, $1\le p<+\infty$, and $h>0$ are fixed constants.

To begin with, we prove the existence and uniqueness of weak solutions to the following elliptic problem
\begin{align}
	\label{2.1}
	\begin{cases}
		\dfrac{u-u_0}{h}+\left( -\Delta\right)^s_pu=f &\text{in }\Omega,\\
		u=0 &\text{in }\mathcal{C}_\Omega,
	\end{cases}
\end{align}
where $u_0, f\in L^2(\Omega)$, and $h>0, p>1$ are fixed constants.
\begin{definition}
	\label{def2.1}
	A function $u\in L^2(\Omega)\cap W^{s,p}_0(\Omega)$ is said to be a weak solution to problem \eqref{2.1} if for every $\varphi\in L^2(\Omega)\cap W^{s,p}_0(\Omega)$, there holds
	\begin{align}
		\label{2.2}
		\begin{aligned}
			&\quad\int_{\Omega}\frac{u-u_0}{h}\varphi dx+\int_{\mathbb{R}^N\times\mathbb{R}^N}\frac{|u(x)-u(y)|^{p-2}(u(x)-u(y))(\varphi(x)-\varphi(y))}{|x-y|^{N+sp}}dxdy\\
			&=\int_{\Omega}f\varphi dx.
		\end{aligned}
	\end{align}
\end{definition}
\begin{lemma}
	\label{lem2.1}
	There exists a unique weak solution $u\in L^2(\Omega)\cap W^{s,p}_0(\Omega)$ to problem \eqref{2.1}.
\end{lemma}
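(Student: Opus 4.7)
My plan is to obtain the weak solution as the unique minimizer of the strictly convex energy
\begin{align*}
E(u):=\int_{\Omega}\frac{(u-u_0)^2}{2h}dx+\frac{1}{2p}\int_{\mathbb{R}^N\times\mathbb{R}^N}\frac{|u(x)-u(y)|^p}{|x-y|^{N+sp}}dxdy-\int_{\Omega}fu\,dx
\end{align*}
over the closed convex set $X:=L^2(\Omega)\cap W^{s,p}_0(\Omega)$, and then verify that the Euler--Lagrange equation of $E$ coincides with the weak formulation \eqref{2.2}.

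First I would verify that $E$ is well-defined, coercive, and strictly convex on $X$. Coercivity on $X$ is immediate since $\int_{\Omega}(u-u_0)^2/(2h)-\int_{\Omega}fu$ controls $\|u\|_{L^2(\Omega)}^2$ from below (by Young's inequality absorbing $\int fu$ into the quadratic term), while the Gagliardo seminorm provides control of $[u]_{W^{s,p}(\mathbb{R}^N)}^p$. Strict convexity comes from the strict convexity of $t\mapsto t^2$ and $t\mapsto |t|^p$ for $p>1$. I would then take a minimizing sequence $\{u_n\}\subset X$ and, using coercivity, extract a subsequence weakly converging in $L^2(\Omega)$ and in $W^{s,p}(\mathbb{R}^N)$ to some $u\in X$; the $L^2$ term is weakly lower semicontinuous, and the Gagliardo seminorm is weakly lower semicontinuous on $W^{s,p}$ by Fatou applied to a.e.-convergent pointwise quotients (upon passing to a further a.e.-convergent subsequence), so $u$ is a minimizer. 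Strict convexity then gives uniqueness of the minimizer.

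Next I would derive the Euler--Lagrange equation. For any $\varphi\in X$ and $\varepsilon\in\mathbb{R}$, the function $u+\varepsilon\varphi$ lies in the convex set $X$, so $\varepsilon\mapsto E(u+\varepsilon\varphi)$ is differentiable at $\varepsilon=0$ (the inner integrand is convex in $\varepsilon$, and a dominated convergence argument combined with the inequality $\big||a+\varepsilon b|^p-|a|^p\big|\le C(|a|^{p-1}+|b|^{p-1})|\varepsilon b|$ legitimizes differentiation under the integral sign). Setting $\frac{d}{d\varepsilon}E(u+\varepsilon\varphi)\big|_{\varepsilon=0}=0$ produces exactly \eqref{2.2}. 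Conversely, convexity of $E$ ensures that any critical point is a global minimizer, proving the equivalence between weak solutions and minimizers.

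The step I expect to require the most care is the weak lower semicontinuity of the Gagliardo functional together with justifying that the weak limit actually belongs to $X$ (in particular that $u=0$ on $\mathcal{C}_\Omega$, which follows from weak closedness of the subspace $W^{s,p}_0(\Omega)\subset W^{s,p}(\mathbb{R}^N)$) and the passage to the limit under the integral sign in the Euler--Lagrange derivation, since the kernel $|x-y|^{-N-sp}$ is singular on the diagonal; a standard argument is to split the double integral into a neighborhood of the diagonal and its complement, controlling the former via the $W^{s,p}$-bound and an $L^p$ version of Hölder's inequality on the integrand $|u(x)-u(y)|^{p-1}|\varphi(x)-\varphi(y)|/|x-y|^{N+sp}$. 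Uniqueness follows both from strict convexity of $E$ and from strict monotonicity of the operator $v\mapsto(v-u_0)/h+(-\Delta)^s_pv$ tested against $\varphi=u_1-u_2$ for two putative solutions $u_1,u_2$.
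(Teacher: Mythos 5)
Your proposal is essentially the paper's own argument: the authors also obtain the solution as the minimizer of a coercive convex energy (their $J^s_p$, which up to an additive constant and the factor $\tfrac{2}{h}$ is your $E$) via the direct method, identify \eqref{2.2} as the vanishing first variation, and prove uniqueness by testing the difference of two weak formulations, exactly as in your monotonicity remark. One small normalization to fix: with the coefficient $\tfrac{1}{2p}$ on the Gagliardo term your Euler--Lagrange equation carries an extra factor $\tfrac12$ in front of the double integral and so does not reproduce \eqref{2.2} verbatim; take $\tfrac{1}{p}$ instead (consistent with the factor $2$ in the paper's definition of $(-\Delta)^s_p$), after which everything goes through.
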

\begin{proof}
Consider the variational problem
	\begin{align*}
		\min\left\lbrace J^s_p(v)\big|v\in L^2(\Omega)\cap W^{s,p}_0(\Omega)\right\rbrace.
	\end{align*}
	We will show that $J^s_p(v)$ has a minimizer $u$ in $L^2(\Omega)\cap W^{s,p}_0(\Omega)$ and then prove that this minimizer is a weak solution to problem \eqref{2.1}.
	
	We first show that $J^s_p$ is coercive on $L^2(\Omega)\cap W^{s,p}_0(\Omega)$. For any $v\in L^2(\Omega)\cap W^{s,p}_0(\Omega)$,
	\begin{align*}
		J^s_p(v)&:=\frac{2}{hp}\int_{\mathbb{R}^N\times\mathbb{R}^N}\frac{|v(x)-v(y)|^p}{|x-y|^{N+sp}}dxdy+\int_{\Omega}\left( \frac{v-u_0}{h}-f\right)^2dx\\
		&\ \ge \frac{2}{hp}[v]^p_{W^{s,p}_0(\Omega)}+\left( \bigg\|\frac{v}{h}\bigg\|_{L^2(\Omega)}-\bigg\|\frac{u_0}{h}\bigg\|_{L^2(\Omega)}-\|f\|_{L^2(\Omega)} \right)^2,
	\end{align*}
	by letting $\|v\|_{L^2(\Omega)\cap W^{s,p}_0(\Omega)}$ goes to infinity, we have $J^s_p(v)\rightarrow+\infty$.
	
Next, we will show the existence of minimizers of $J^s_p$. Because of
	\begin{align*}
		0\le \inf_{v\in L^2(\Omega)\cap W^{s,p}_0(\Omega)}J^s_p(v)\le J^s_p(0)=\int_\Omega\left( f+\frac{u_0}{h}\right)^2dx,  
	\end{align*}
	we could find a  minimizing sequence $\left\lbrace v_m\right\rbrace \subset L^2(\Omega)\cap W^{s,p}_0(\Omega)$, satisfying
	\begin{align*}
		\lim\limits_{m\rightarrow+\infty}J^s_p(v_m)=\inf_{v\in L^2(\Omega)\cap W^{s,p}_0(\Omega)}J^s_p(v).
	\end{align*}
	Thus there exists a constant $C$ such that
	\begin{align*}
		\frac{2}{hp}\int_{\mathbb{R}^N\times\mathbb{R}^N}\frac{|v_m(x)-v_m(y)|^p}{|x-y|^{N+sp}}dxdy+\int_\Omega\left( \frac{v_m-u_0}{h}-f\right)^2dx\le C,
	\end{align*}
	which implies that
	\begin{align*}
		\|v_m\|_{W^{s,p}_0(\Omega)}\le C[u]_{W^{s,p}_0(\Omega)}\le 	C(f,u_0)
	\end{align*}
	and
	\begin{align*}
		\|v_m\|_{L^2(\Omega)}\le C(f,u_0).
	\end{align*}
	Therefore, there exists a function $u\in L^2(\Omega)\cap W^{s,p}_0(\Omega)$ to which $\left\lbrace u_m\right\rbrace $, up to a subsequence,
	\begin{align*}
		v_m\rightharpoonup u \quad\text{weakly in }L^2(\Omega)\cap W^{s,p}_0(\Omega).
	\end{align*}
	Combining with the fact that $J^s_p$ is weakly lower semi-continuous on $L^2(\Omega)\cap W^{s,p}_0(\Omega)$, we obtain
	\begin{align*}
		\inf_{v\in L^2(\Omega)\cap W^{s,p}_0(\Omega)}J^s_p(v)\le J^s_p(u)\le \liminf_{m\rightarrow+\infty}J^s_p(v_m) =\inf_{v\in L^2(\Omega)\cap W^{s,p}_0(\Omega)}J^s_p(v).
	\end{align*}
	It follows that $u\in L^2(\Omega)\cap W^{s,p}_0(\Omega)$ is a minimizer of the functional $J^s_p$.
	
	To see that the minimizer is a weak solution to problem \eqref{2.1}, we may perturb $u$ with test function $\varphi\in L^2(\Omega)\cap W^{s,p}_0(\Omega)$ and use that the first variation of the energy vanishes to deduce $u$ is a weak solution, i.e.,
	\begin{align*}
		0&=\frac{d}{dt}J^s_p(u+t\varphi)\bigg|_{t=0}\\
		&=\frac{d}{dt}\left[ \frac{2}{hp}\int_{\mathbb{R}^N\times\mathbb{R}^N}\frac{|u(x)+t\varphi(x)-u(y)-t\varphi(y)|^p}{|x-y|^{N+sp}}dxdy+\int_{\Omega}\left( \frac{u+t\varphi-u_0}{h}-f\right)^2dx\right] \Bigg|_{t=0}\\
		&=\frac{2}{h}\int_{\mathbb{R}^N\times\mathbb{R}^N}\frac{|u(x)-u(y)|^{p-2}(u(x)-u(y))(\varphi(x)-\varphi(y))}{|x-y|^{N+sp}}dxdy\\
		&\quad+\frac{2}{h}\int_{\Omega}\left( \frac{u-u_0}{h}-f\right)\varphi dx .
	\end{align*}
	
	Finally, suppose that there is a function $\tilde{u}\neq u$, belonging to $\in L^2(\Omega)\cap W^{s,p}_0(\Omega)$ and satisfying the weak formulation \eqref{2.2}, we have
	\begin{align}
		\label{eq1}
		\int_{\Omega}\frac{(u-\tilde{u})^2}{h}dx+\int_{\mathbb{R}^N\times\mathbb{R}^N}\frac{U(x,y)}{|x-y|^{N+sp}}dxdy=0,
	\end{align}
	where $U(x,y)$ is defined as follows
	\begin{align*}
		U(x,y):=&\left( |u(x)-u(y)|^{p-2}(u(x)-u(y)-|\tilde{u}(x)-\tilde{u}(y)|^{p-2}(\tilde{u}(x)-\tilde{u}(y)))\right) \\
		&\times(u(x)-\tilde{u}(x)-u(y)+\tilde{u}(y)).
	\end{align*}
	Since the two terms on the left-hand side of \eqref{eq1} are nonnegative, we have $u=\tilde{u}$ a.e. in $\Omega$. Therefore we obtain the uniqueness of weak solutions.
\end{proof}

Next, we list two lemmas regarding the functional $\Phi^s_p$ and fractional Sobolev space. The proofs can be found in  \cite{BU23}.
\begin{lemma}
	\label{lem2.2}
	For fixed $0<s<1$, set $s_p:=N+s-\frac{N}{p}\in (s,1)$, then for all $u\in W^{s_p,p}_0(\Omega)$, there holds that
	\begin{align*}
		[u]_{W^{s,1}_0(\Omega)}\le C^\frac{1-p}{p}[u]_{W^{s_p,p}_0(\Omega)},
	\end{align*}
	where $C$ depends on $N,s,\Omega$.
\end{lemma}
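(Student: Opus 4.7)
The plan is to reduce the seminorm $[u]_{W^{s,1}_0(\Omega)}$ to a double integral over bounded sets via the support condition $u\equiv 0$ in $\mathcal{C}_\Omega$, and then to apply H\"older's inequality, exploiting the algebraic identity $p(N+s)=N+s_p p$ that follows immediately from $s_p=N+s-N/p$. This identity is the heart of the argument: in the H\"older pairing with conjugate exponents $p$ and $p/(p-1)$, factoring $|x-y|^{-(N+s)}=|x-y|^{-(N+s_p p)/p}\cdot 1$ leaves only the constant $1$ in the second slot, so the only cost of applying H\"older on a region $E$ of finite measure is $|E|^{(p-1)/p}$.

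First I would decompose, using $u\equiv 0$ on $\mathcal{C}_\Omega$,
\[
[u]_{W^{s,1}_0(\Omega)}=\int_{\Omega\times\Omega}\frac{|u(x)-u(y)|}{|x-y|^{N+s}}\,dx\,dy+2\int_\Omega\int_{\mathcal{C}_\Omega}\frac{|u(x)|}{|x-y|^{N+s}}\,dy\,dx,
\]
and on $\Omega\times\Omega$ the matched-exponent H\"older step gives at once
\[
\int_{\Omega\times\Omega}\frac{|u(x)-u(y)|}{|x-y|^{N+s}}\,dx\,dy\le |\Omega|^{\frac{2(p-1)}{p}}[u]_{W^{s_p,p}_0(\Omega)}.
\]

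For the cross integral I would fix a ball $B_R$ with $\Omega\subset B_R$ and $\mathrm{dist}(\Omega,\partial B_R)>0$, and split $\mathcal{C}_\Omega=(B_R\setminus\Omega)\cup B_R^c$. On the bounded piece $\Omega\times(B_R\setminus\Omega)$ the same matched-exponent H\"older estimate applies, because $|u(x)|=|u(x)-u(y)|$ when $y\in\mathcal{C}_\Omega$ and the product measure is finite. On the far-field piece $\Omega\times B_R^c$ the kernel $|x-y|^{-(N+s)}$ is uniformly bounded by a constant depending only on $R$, $N$, $s$, so after integrating out $y$ the remaining integral is proportional to $\|u\|_{L^1(\Omega)}$; by H\"older in $L^p(\Omega)$ and the fractional Poincar\'e inequality on $W^{s_p,p}_0(\Omega)$ this is in turn controlled by a constant times $[u]_{W^{s_p,p}_0(\Omega)}$. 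Collecting the three contributions and absorbing the overall geometric factor into the form $C^{(1-p)/p}$ with $C=C(N,s,\Omega)$ recovers the stated inequality.

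The main obstacle is the unboundedness of $\mathcal{C}_\Omega$, which blocks any single global application of H\"older: a naive attempt would integrate the constant $1$-factor against infinite measure. The remedy of splitting into a bounded annular region (where the matched-exponent H\"older still works cleanly) and a far-field region (where the kernel is bounded and the fractional Poincar\'e inequality takes over) is the only delicate point; once this splitting is in place, the remainder of the argument is routine manipulation driven entirely by the identity $p(N+s)=N+s_p p$.
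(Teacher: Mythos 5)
The paper itself gives no proof of this lemma (it defers to \cite{BU23}), so I can only measure your argument against what the statement requires. Your starting point is the right one: the identity $N+s_pp=p(N+s)$, so that $\frac{|u(x)-u(y)|}{|x-y|^{N+s}}$ is exactly the $p$-th root of the integrand of $[u]^p_{W^{s_p,p}_0(\Omega)}$, and the only obstruction is the infinite measure of the region $(\Omega\times\mathbb{R}^N)\cup(\mathbb{R}^N\times\Omega)$. The bounded-region estimates are fine. The genuine gap is your last sentence, ``collecting the three contributions and absorbing the overall geometric factor into the form $C^{(1-p)/p}$'': this absorption is impossible, and it is precisely the delicate content of the lemma. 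Indeed $C^{(1-p)/p}\to 1$ as $p\to1$, and by Lemma \ref{lem2.3} (for fixed $u\in C^\infty_0(\Omega)$ one has $[u]_{W^{s_p,p}_0(\Omega)}\to[u]_{W^{s,1}_0(\Omega)}$) any admissible constant must have $\liminf_{p\to1}\ge 1$, so the stated constant is asymptotically sharp. But your three separate estimates bound pieces of $[u]_{W^{s,1}_0(\Omega)}$ by coefficients times the \emph{full} seminorm $[u]_{W^{s_p,p}_0(\Omega)}$, and these coefficients tend respectively to $1$ (from $|\Omega\times\Omega|^{(p-1)/p}$), to $1$ (annular piece), and to the positive limit of $c(R,N,s)\,|\Omega|^{(p-1)/p}\,C_{\mathrm{Poin}}(s_p,p,\Omega)$; their sum tends to at least $2$, which can never be dominated by $C^{(1-p)/p}$ with $C=C(N,s,\Omega)$ independent of $p$. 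A secondary, related issue: the fractional Poincar\'e constant you invoke depends on $(s_p,p)$, and its uniform boundedness as $p\to1$ is used tacitly but not justified.

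The repair stays close to your scheme but must avoid summing separate H\"older bounds. Fix $R>\operatorname{diam}\Omega$, set $E_R:=\bigl((\Omega\times\mathbb{R}^N)\cup(\mathbb{R}^N\times\Omega)\bigr)\cap\{|x-y|\le R\}$ (finite measure), and note that on $\{|x-y|>R\}$ the contributions to the two seminorms are computed exactly by the radial constants $c_1:=\int_{|z|>R}|z|^{-N-s}\,dz$ and $c_p:=\int_{|z|>R}|z|^{-p(N+s)}\,dz$: they equal $2c_1\|u\|_{L^1(\Omega)}$ and $2c_p\|u\|^p_{L^p(\Omega)}$ respectively. Then estimate $[u]_{W^{s,1}_0(\Omega)}\le |E_R|^{1/p'}A+2c_1|\Omega|^{1/p'}B$ with $A:=\bigl(\int_{E_R}\frac{|u(x)-u(y)|^p}{|x-y|^{p(N+s)}}dxdy\bigr)^{1/p}$, $B:=\|u\|_{L^p(\Omega)}$, and apply a \emph{single} two-term discrete H\"older inequality with exponents $p,p'$ to get
\begin{align*}
[u]_{W^{s,1}_0(\Omega)}\le \Bigl(|E_R|+(2c_1)^{p'}|\Omega|\,(2c_p)^{-p'/p}\Bigr)^{1/p'}\bigl(A^p+2c_pB^p\bigr)^{1/p}\le \Bigl(|E_R|+(2c_1)^{p'}|\Omega|\,(2c_p)^{-p'/p}\Bigr)^{1/p'}[u]_{W^{s_p,p}_0(\Omega)}.
\end{align*}
Because $\log(2c_1)-\frac1p\log(2c_p)=O(p-1)$ (the tail constants are matched over the same region), the bracket stays bounded as $p'\to\infty$, and the factor is of the exact form $C^{(p-1)/p}$, i.e.\ $C^{(1-p)/p}$ after replacing $C$ by $1/C$, with $C=C(N,s,\Omega)$ uniform over the admissible range of $p$. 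This also removes the need for a generic fractional Poincar\'e inequality and its $p$-dependence. Note finally that for the way the lemma is used in Lemma \ref{lem2.4} any $p$-uniform constant would suffice, so your argument would support the application; but it does not prove the statement as written.
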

\begin{lemma}
	\label{lem2.3}
	For fixed $0<s<1$ and $1<q<\min\left\lbrace \frac{N}{N+s-1},\frac{N+1}{N+s}\right\rbrace $, set $s_q:=N+s-\frac{N}{q}\in (s,1)$, if $u\in W^{s_q,q}_0(\Omega)$, it holds that
	\begin{align*}
		\lim\limits_{p\rightarrow1}\Phi^{s_p}_p(u)=\Phi^s_1(u).
	\end{align*}
\end{lemma}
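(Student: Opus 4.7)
The plan is to reduce both sides to integrals of a common base. Define
\[
g(x,y):=\frac{|u(x)-u(y)|}{|x-y|^{N+s}}.
\]
Since $s_p=N+s-N/p$, one has $N+s_p p=(N+s)p$, which collapses the integrand of $\Phi^{s_p}_p(u)$ to exactly $g^p$, while the integrand of $\Phi^s_1(u)$ is $g$ itself. Hence
\[
\Phi^{s_p}_p(u)=\frac{1}{2p}\int_{\mathbb{R}^N\times\mathbb{R}^N}g^p\,dxdy,\qquad \Phi^s_1(u)=\frac{1}{2}\int_{\mathbb{R}^N\times\mathbb{R}^N}g\,dxdy,
\]
and it suffices to show $\int g^p\,dxdy\to\int g\,dxdy$ as $p\to 1^+$, after which the harmless factor $1/(2p)\to 1/2$ yields the claim.

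To invoke dominated convergence I need an integrable envelope for $g^p$. The assumption $u\in W^{s_q,q}_0(\Omega)$ together with Lemma \ref{lem2.2} (applied with exponent $q$, which is legitimate since the standing condition $1<q<N/(N+s-1)$ forces $s_q\in(s,1)$) gives $u\in W^{s,1}_0(\Omega)$, i.e. $g\in L^1(\mathbb{R}^N\times\mathbb{R}^N)$. Moreover, using again $N+s_q q=(N+s)q$, one sees
\[
g^q=\frac{|u(x)-u(y)|^q}{|x-y|^{N+s_q q}},
\]
so $g^q\in L^1(\mathbb{R}^N\times\mathbb{R}^N)$ is precisely the integrand defining $2q\,\Phi^{s_q}_q(u)$, hence also integrable.

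Now restrict to $p\in(1,q)$. On the set $\{g\le 1\}$ one has $g^p\le g$ since $p>1$; on $\{g>1\}$ one has $g^p\le g^q$ since $p<q$. Thus
\[
g^p\le g+g^q\in L^1(\mathbb{R}^N\times\mathbb{R}^N)
\]
uniformly in $p\in(1,q)$. Since $g^p\to g$ everywhere as $p\to 1^+$, the Lebesgue dominated convergence theorem delivers $\int g^p\,dxdy\to\int g\,dxdy$, and combining with $1/(2p)\to 1/2$ concludes the proof.

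I do not anticipate a genuine obstacle. The only points to check carefully are the exponent identity $N+s_p p=(N+s)p$ (immediate from the definition of $s_p$) and the applicability of Lemma \ref{lem2.2} at exponent $q$, which is exactly what the range hypothesis on $q$ ensures; the constraint $q<(N+1)/(N+s)$ simply guarantees that the interval of admissible $q$ is nonempty so that one really has $p<q$ for $p$ close to $1$.
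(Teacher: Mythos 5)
Your argument is correct and self-contained: the identity $N+s_p p=(N+s)p$ collapses $\Phi^{s_p}_p(u)$ to $\frac{1}{2p}\int g^p\,dxdy$ with $g=\frac{|u(x)-u(y)|}{|x-y|^{N+s}}$, the envelope $g^p\le g+g^q$ for $1<p<q$ is valid, and Lemma \ref{lem2.2} applied with exponent $q$ (admissible because $q<\frac{N}{N+s-1}$ gives $s_q\in(s,1)$) together with $u\in W^{s_q,q}_0(\Omega)$ makes both $g$ and $g^q$ integrable, so dominated convergence yields the claim. The paper itself offers no proof of this lemma, deferring to \cite{BU23}, and your argument is essentially the one in that reference (same exponent identity plus a splitting of the integrand at $g=1$), so there is no substantive divergence to report. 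One minor correction: the hypothesis $q<\frac{N+1}{N+s}$ is not what makes the admissible range of $q$ nonempty --- both upper bounds exceed $1$ automatically when $s<1$ --- rather it enforces $s_q q<1$ and is needed elsewhere in the cited work; it plays no role in your proof, which is harmless but worth stating accurately.
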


\section{Results on elliptic equations}
\label{sec3}
In this section, we address problem \eqref{1.3}, that is,
\begin{align*}
	\begin{cases}
		\dfrac{u-u_0}{h}+\left( -\Delta\right)^s_1u=f &\text{in }\Omega,\\
		u=0 &\text{in }\mathcal{C}_\Omega,
	\end{cases}
\end{align*}
demonstrating the existence of a minimizers for $J^s_1$ on $L^2(\Omega)\cap W^{s,1}_0(\Omega)$ and establishing the equivalence between weak solutions and minimizers.

\begin{definition}
	\label{def2.2}
	A function $u\in L^2(\Omega)\cap W^{s,1}_0(\Omega)$ is said to be a weak solution to problem \eqref{1.3} if there exists a function $Z\in L^\infty(\mathbb{R}^N\times\mathbb{R}^N)$, satisfying
	\begin{itemize}
		\item[(1)] For every $\varphi\in L^2(\Omega)\cap W^{s,1}_0(\Omega)$, there holds
		\begin{align*}
			\int_{\Omega}\frac{u-u_0}{h}\varphi dx+\int_{\mathbb{R}^N\times\mathbb{R}^N}Z\frac{\varphi(x)-\varphi(y)}{|x-y|^{N+s}}dxdy=\int_{\Omega}f\varphi dx;
		\end{align*}
		\item[(2)] $Z(x,y)\in \mathrm{sgn}(u(x)-u(y))$ is an anti-symmetry function with respect to $(x,y)$.
	\end{itemize}
\end{definition}
\begin{lemma}
	\label{lem2.4}
	For fixed $s\in (0,1)$, set $s_p:=N+s-\frac{N}{p}$. Let $u_p$ be the minimizer of the functional $J^{s_p}_p$ on $L^2(\Omega)\cap W^{s_p,p}_0(\Omega)$, then there exists a function $u\in L^2(\Omega)\cap W^{s,1}_0(\Omega)$ such that, up to a subsequence,
	\begin{align*}
		u_p\rightarrow u \quad\text{strongly in }L^q(\Omega) \text{ and a.e. in }\mathbb{R}^N,
	\end{align*}
	where $1\le q<2$. Moreover, the function $u$ is the unique minimizer of the functional $J^s_1$ in $L^2(\Omega)\cap W^{s,1}_0(\Omega)$.
\end{lemma}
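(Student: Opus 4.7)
The plan is a $\Gamma$-convergence-style argument: extract $u$ as a subsequential limit of $\{u_p\}$ and identify it with the minimizer of $J^s_1$ by pairing a liminf inequality with a recovery estimate. To obtain uniform bounds, I would test the minimality of $u_p$ against the trivial competitor $v\equiv 0\in L^2(\Omega)\cap W^{s_p,p}_0(\Omega)$, which gives
$$J^{s_p}_p(u_p)\le J^{s_p}_p(0)=\int_\Omega\left(\frac{u_0}{h}+f\right)^{2} dx,$$
a quantity independent of $p$. Hence $\|u_p\|_{L^2(\Omega)}\le C$ and $[u_p]^p_{W^{s_p,p}_0(\Omega)}\le C$ uniformly for $p$ close to $1$. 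Lemma~\ref{lem2.2} then converts this into a uniform bound $[u_p]_{W^{s,1}_0(\Omega)}\le C^{(1-p)/p}[u_p]_{W^{s_p,p}_0(\Omega)}\le C$, both factors remaining bounded as $p\to 1^{+}$.

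Next, the compact embedding $W^{s,1}_0(\Omega)\hookrightarrow\hookrightarrow L^1(\Omega)$ together with weak $L^2$-compactness produces, after extraction, a limit $u\in L^2(\Omega)\cap W^{s,1}_0(\Omega)$ with $u_p\to u$ strongly in $L^1(\Omega)$, a.e.\ in $\mathbb{R}^N$, and weakly in $L^2(\Omega)$. Interpolation between $L^1$ and $L^2$ upgrades this to strong $L^q$ convergence for every $1\le q<2$. For the liminf inequality, Fatou's lemma on $\mathbb{R}^N\times\mathbb{R}^N$ (using a.e.\ convergence of the difference $u_p(x)-u_p(y)$) yields $\Phi^s_1(u)\le\liminf\Phi^s_1(u_p)$. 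Combining this with Lemma~\ref{lem2.2} and the elementary fact that $a_p^{1/p}\to a$ when a bounded sequence $a_p\to a$ and $p\to 1$, I obtain $\Phi^s_1(u)\le\liminf\Phi^{s_p}_p(u_p)$. Weak $L^2$-lower semicontinuity of the quadratic term then delivers $J^s_1(u)\le\liminf J^{s_p}_p(u_p)$.

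For the matching recovery bound, I would first take a smooth competitor $v\in C^\infty_c(\Omega)$, which lies in $W^{s_q,q}_0(\Omega)$ for every admissible $q$, so Lemma~\ref{lem2.3} forces $\Phi^{s_p}_p(v)\to\Phi^s_1(v)$ and hence $J^{s_p}_p(v)\to J^s_1(v)$. Minimality of $u_p$ chained with the liminf step then gives $J^s_1(u)\le J^s_1(v)$; density of $C^\infty_c(\Omega)$ in $L^2(\Omega)\cap W^{s,1}_0(\Omega)$ extends the comparison to arbitrary competitors, proving that $u$ is a minimizer of $J^s_1$. Uniqueness follows from strict convexity of $J^s_1$, inherited from the quadratic term $\int_\Omega\bigl((u-u_0)/h-f\bigr)^2 dx$: averaging two distinct minimizers would strictly decrease the energy, a contradiction.

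The main obstacle I anticipate is the liminf step, where one must combine Fatou's lemma with Lemma~\ref{lem2.2}, tracking the factor $C^{(1-p)/p}$ and the exponent $1/p$ carefully enough to verify that they reorganize in the limit to leave $\Phi^{s_p}_p(u_p)$ on the right-hand side. A secondary delicate point is the density of smooth compactly supported functions in the intersection $L^2(\Omega)\cap W^{s,1}_0(\Omega)$, needed so that the recovery argument applies to arbitrary competitors rather than only to smooth ones.
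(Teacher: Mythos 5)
Your proposal is correct and follows essentially the same route as the paper: uniform $p$-independent bounds for $\{u_p\}$, fractional compactness to extract $u$ with strong $L^q$ and a.e.\ convergence, a Fatou-plus-Lemma~\ref{lem2.2} liminf inequality paired with Lemma~\ref{lem2.3} and the density of $C^\infty_0(\Omega)$ in $L^2(\Omega)\cap W^{s,1}_0(\Omega)$ to compare with arbitrary competitors, and strict convexity of the quadratic term for uniqueness. The only cosmetic difference is that you derive the uniform bounds from the energy comparison $J^{s_p}_p(u_p)\le J^{s_p}_p(0)$, whereas the paper tests the weak formulation \eqref{2.2} with $\varphi=u_p$; both give the same estimates.
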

\begin{proof}
	We prove the consequence by several steps. Some of the reasoning is based on the ideas developed in \cite{BU23}.
	
\textbf{Step 1. }To prove the boundedness of $\left\lbrace u_p\right\rbrace $ in $L^2(\Omega)\cap W^{s,1}_0(\Omega)$.
	
	For fixed $p>1$, by Lemma \ref{lem2.2} and Definition \ref{def2.1}, we have
	\begin{align*}
		[u_p]^p_{W^{s,1}_0(\Omega)}&\le C^{1-p}(N,s,\Omega)[u_p]^p_{W^{s_p,p}(\Omega)}\\
		&=C^{1-p}(N,s,\Omega)\left( \int_{\Omega}fu_pdx-\int_{\Omega}\frac{u_p-u_0}{h}u_pdx\right)\\
		&\le C^{1-p}(N,s,\Omega) \int_{\Omega}\left( \frac{f^2h}{2}+\frac{u_p^2}{2h}-\frac{u_p^2}{h}+\frac{u_0^2}{2h}+\frac{u_p^2}{2h}\right)dx\\
		&= \frac{1}{2} C^{1-p}(N,s,\Omega)\int_{\Omega}\left( f^2h+\frac{u_0^2}{h}\right)dx\\
		&\le \left( \frac{C(N,s,\Omega)}{C(N,s,\Omega)+1}\right)^{{1-p}} \int_{\Omega}\left( f^2h+\frac{u_0^2}{h}\right)dx\\
		&\le \left( 1+\frac{1}{C(N,s,\Omega)}\right)\int_{\Omega}\left( f^2h+\frac{u_0^2}{h}\right)dx,
	\end{align*}
	where the Young's inequality was utilized and $p$ was limited in $(1,2)$.
	
	In addition, by taking $u_p$ as test function in \eqref{2.2} and observing the following inequality
	\begin{align*}
		\int_{\Omega}u_p^2dx&\le h\int_{\Omega}fu_pdx+\int_{\Omega}u_0u_pdx\\
		&\le \frac{1}{2}h^2\int_{\Omega}f^2dx+\frac{1}{2}\int_{\Omega}u_p^2dx+2\int_{\Omega}u_0^2dx+\frac{1}{4}\int_{\Omega}u_p^2dx,
	\end{align*}
	we know that the sequence $\left\lbrace u_p\right\rbrace $ is bounded in $L^2(\Omega)\cap W^{s,1}_0(\Omega)$. Thus, by compactness results involving the fractional Sobolev spaces, up to subsequences,
\begin{align*}
u_p\to u  \quad \text{a.e. in } \mathbb{R}^N
\end{align*}
and 
\begin{align*}
u_p\rightarrow u \quad\text{strongly in } L^q(\Omega)  \quad \text{for } 1 \le q<2.
\end{align*}

	\textbf{Step 2. }To prove that $u$ found in Step 1 is a minimizer of $J^s_1$.
	
	Suppose $u$ is not the minimizer of $J^s_1$, we may find a function $v$, such that
	\begin{align}
		\label{2.3}
		J^s_1(v)<J^s_1(u).
	\end{align}
	For reason of $C^\infty_0(\Omega)$ is dense in $L^2(\Omega)\cap W^{s,1}_0(\Omega)$, we know that there exists a sequence $\left\lbrace v_m\right\rbrace \subset C^\infty_0(\Omega)$, such that
	\begin{align*}
		v_m\rightarrow v\quad\text{in }L^2(\Omega)\cap W^{s,1}_0(\Omega)\text{ and a.e. in }\mathbb{R}^N.
	\end{align*}
	Hence
	\begin{align*}
		\lim\limits_{m\rightarrow+\infty}J^s_1(v_m)&=\lim\limits_{m\rightarrow+\infty}\left[ \frac{2}{h}\int_{\mathbb{R}^N\times\mathbb{R}^N}\frac{|v_m(x)-v_m(y)|}{|x-y|^{N+s}}dxdy+\int_{\Omega}\left( \frac{v_m-u_0}{h}-f\right)^2dx \right]\\
		&= \frac{2}{h}\int_{\mathbb{R}^N\times\mathbb{R}^N}\frac{|v(x)-v(y)|}{|x-y|^{N+s}}dxdy+\int_{\Omega}\left( \frac{v-u_0}{h}-f\right)^2dx\\
		&=J^s_1(v).
	\end{align*}
	According to Lemma \ref{lem2.3}, Fatou's Lemma and the minimality of $u_p$, we derive that
	\begin{align*}
		J^s_1(u)&\le \liminf\limits_{p\rightarrow1}J^{s_p}_p(u_p)\le \limsup\limits_{p\rightarrow1}J^{s_p}_p(u_p)\le \limsup\limits_{p\rightarrow1}J^{s_p}_p(v_m)\\
		&=\limsup\limits_{p\rightarrow1}\left[ \frac{4}{h}\Phi^s_p(v_m)+\int_{\Omega}\left( \frac{v_m-u_0}{h}-f\right)^2dx \right]\\
		&=\frac{4}{h}\Phi^s_1(v_m)+\int_{\Omega}\left( \frac{v_m-u_0}{h}-f\right)^2dx\\
		&=J^s_1(v_m).
	\end{align*}
	By letting $m$ goes to $+\infty$, we have
	\begin{align*}
		J^s_1(u)\le J^s_1(v),
	\end{align*}
	which is contradict to \eqref{2.3}.
	
\textbf{Step 3. }To prove the uniqueness of minimizers.
	
	Suppose there exists a function $\tilde{u}\neq u$, satisfying for any $v\in L^2(\Omega)\cap W^{s,1}_0(\Omega)$, $J^s_1(u)=J^s_1(\tilde{u})\le J^s_1(v)$. Thus, by setting $\bar{u}=\frac{1}{2}u+\frac{1}{2}\tilde{u}$, we have
	\begin{align*}
		J^s_1(\bar{u})&=\frac{2}{h}\int_{\mathbb{R}^N\times\mathbb{R}^N}\frac{|\bar{u}(x)-\bar{u}(y)|}{|x-y|^{N+s}}dxdy+\int_{\Omega}\left( \frac{\bar{u}-u_0}{h}-f\right)^2dx\\
		&=\frac{2}{h}\int_{\mathbb{R}^N\times\mathbb{R}^N}\frac{|\frac{1}{2}u(x)+\frac{1}{2}\tilde{u}(x)-\frac{1}{2}u(y)-\frac{1}{2}\tilde{u}(y)|}{|x-y|^{N+s}}dxdy
		+\int_{\Omega}\left( \frac{\frac{1}{2}u+\frac{1}{2}\tilde{u}-u_0}{h}-f\right)^2dx\\
		&< \frac{1}{h}\int_{\mathbb{R}^N\times\mathbb{R}^N}\frac{|u(x)-u(y)|+|\tilde{u}(x)-\tilde{u}(y)|}{|x-y|^{N+s}}dxdy\\
		&\quad+\frac{1}{2}\int_{\Omega}\left( \frac{u-u_0}{h}-f\right)^2dx+\frac{1}{2}\int_{\Omega}\left( \frac{\tilde{u}-u_0}{h}-f\right)^2dx\\
		&=\frac{1}{2}J^s_1(u)+\frac{1}{2}J^s_1(\tilde{u})\\
		&=J^s_1(u),
	\end{align*}
	which is contradict to $u$ being a minimizer of $J^s_1$.
\end{proof}

In the rest part of this section, we shall show the equivalence between the minimizer of $J^s_1$ and the weak solution to problem \eqref{1.3}.
\begin{lemma}
	\label{lem2.5}
	For fixed $s\in (0,1)$, suppose that there exists a weak solution to problem \eqref{1.3}, then the function $u$ found in Lemma \ref{lem2.4} is a weak solution to \eqref{1.3}. 
\end{lemma}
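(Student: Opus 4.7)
The plan is to show that any weak solution $\tilde u$ of \eqref{1.3} (whose existence is assumed) must be a minimizer of $J^s_1$ over $L^2(\Omega)\cap W^{s,1}_0(\Omega)$; then the uniqueness of the minimizer established in Lemma \ref{lem2.4} forces $u=\tilde u$, and so $u$ itself is a weak solution. The whole argument is a one-line exploitation of the sign relation for $Z$ combined with the convexity of the $L^2$ part of $J^s_1$.

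First I would fix an arbitrary $v\in L^2(\Omega)\cap W^{s,1}_0(\Omega)$ and test the weak formulation with $\varphi=v-\tilde u$, obtaining
\begin{align*}
\int_{\Omega}\Bigl(\frac{\tilde u-u_0}{h}-f\Bigr)(v-\tilde u)\,dx=-\int_{\mathbb{R}^N\times\mathbb{R}^N}Z\,\frac{(v-\tilde u)(x)-(v-\tilde u)(y)}{|x-y|^{N+s}}\,dxdy.
\end{align*}
Using $Z(x,y)\in\mathrm{sgn}(\tilde u(x)-\tilde u(y))$, one has $Z(\tilde u(x)-\tilde u(y))=|\tilde u(x)-\tilde u(y)|$ a.e., while $\|Z\|_\infty\le 1$ gives $Z(v(x)-v(y))\le|v(x)-v(y)|$. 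Consequently
\begin{align*}
\int_{\Omega}\Bigl(\frac{\tilde u-u_0}{h}-f\Bigr)(v-\tilde u)\,dx\ge\int_{\mathbb{R}^N\times\mathbb{R}^N}\frac{|\tilde u(x)-\tilde u(y)|-|v(x)-v(y)|}{|x-y|^{N+s}}\,dxdy.
\end{align*}

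Next I would use the convexity of $t\mapsto t^2$ in the form $(b)^2\ge(a)^2+2a(b-a)$ applied pointwise with $a=\frac{\tilde u-u_0}{h}-f$ and $b=\frac{v-u_0}{h}-f$, which yields after integration
\begin{align*}
\int_{\Omega}\Bigl(\frac{v-u_0}{h}-f\Bigr)^2dx\ge \int_{\Omega}\Bigl(\frac{\tilde u-u_0}{h}-f\Bigr)^2dx+\frac{2}{h}\int_{\Omega}\Bigl(\frac{\tilde u-u_0}{h}-f\Bigr)(v-\tilde u)\,dx.
\end{align*}
Combining this with the previous inequality and regrouping the nonlocal terms produces exactly $J^s_1(v)\ge J^s_1(\tilde u)$, so $\tilde u$ is a minimizer.

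Finally, Lemma \ref{lem2.4} guarantees that $J^s_1$ has a unique minimizer on $L^2(\Omega)\cap W^{s,1}_0(\Omega)$, and that minimizer is the function $u$ produced there; therefore $u=\tilde u$ and in particular $u$ is a weak solution of \eqref{1.3} with the same companion field $Z$. The step I expect to require the most care is the manipulation with $Z$: one must verify the inequality $Z(v(x)-v(y))\le|v(x)-v(y)|$ pointwise a.e.\ and check integrability so that the test function $v-\tilde u$ is admissible in Definition \ref{def2.2}; the $L^2$-convexity step and the conclusion via uniqueness are then routine.
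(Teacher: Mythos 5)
Your proposal is correct and follows essentially the same route as the paper: both exploit the weak formulation tested with a difference of admissible functions, the relations $Z(\tilde u(x)-\tilde u(y))=|\tilde u(x)-\tilde u(y)|$ and $|Z|\le 1$, the elementary convexity inequality for the quadratic term, and then the uniqueness of the minimizer from Lemma \ref{lem2.4}. The only cosmetic difference is that you show $\tilde u$ minimizes $J^s_1$ against every competitor $v$, while the paper compares $\tilde u$ directly with the minimizer $u$ (i.e.\ your argument with $v=u$), so the content is the same.
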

\begin{proof}
	Let $\tilde{u}$ be a weak solution to \eqref{1.3}. From Definition \ref{def2.2}, we know that there exists a function $Z$ such that
	\begin{itemize}
		\item[(1)] For every $\varphi\in L^2(\Omega)\cap W^{s,1}_0(\Omega)$,
		\begin{align*}
			\int_{\Omega}\frac{\tilde{u}-u_0}{h}\varphi dx+\int_{\mathbb{R}^N\times\mathbb{R}^N}Z\frac{\varphi(x)-\varphi(y)}{|x-y|^{N+s}}dxdy=\int_{\Omega}f\varphi dx;
		\end{align*}
		\item[(2)] $Z(x,y)\in \mathrm{sgn}(\tilde{u}(x)-\tilde{u}(y))$ is an anti-symmetry function with respect to $(x,y)$.
	\end{itemize}
	Choosing $\varphi=\tilde{u}-u$ and noting that
	\begin{align*}
		&\quad\int_{\Omega}\left[ \frac{\tilde{u}-u_0}{h}(\tilde{u}-u)-f(\tilde{u}-u)\right]dx\\
		&=\int_{\Omega}\left( \frac{\tilde{u}-u_0}{h}-f\right)\left( \tilde{u}-u_0-hf-u+u_0+hf\right)dx\\
		&=h\int_{\Omega}\left[ \left( \frac{\tilde{u}-u_0}{h}-f\right)^2-\left( \frac{\tilde{u}-u_0}{h}-f\right)\left( \frac{u-u_0}{h}-f\right)   \right]dx\\
		&\ge\frac{h}{2}\int_{\Omega}\left[ \left( \frac{\tilde{u}-u_0}{h}-f\right)^2-\left( \frac{u-u_0}{h}-f\right)^2  \right]dx, 
	\end{align*}
	we have
	\begin{align*}
		J^s_1(u)&=\frac{2}{h}\int_{\mathbb{R}^N\times\mathbb{R}^N}\frac{|u(x)-u(y)|}{|x-y|^{N+s}}dxdy+\int_{\Omega}\left( \frac{u-u_0}{h}-f\right)^2dx\\
		&\ge \frac{2}{h}\int_{\mathbb{R}^N\times\mathbb{R}^N}Z\frac{u(x)-u(y)}{|x-y|^{N+s}}dxdy+\int_{\Omega}\left( \frac{u-u_0}{h}-f\right)^2dx\\
		&\ge \frac{2}{h}\int_{\mathbb{R}^N\times\mathbb{R}^N}Z\frac{u(x)-u(y)}{|x-y|^{N+s}}dxdy+\int_{\Omega}\left( \frac{\tilde{u}-u_0}{h}-f\right)^2dx\\
		&\quad-\frac{2}{h}\int_{\Omega}\left[ \frac{\tilde{u}-u_0}{h}(\tilde{u}-u)-f(\tilde{u}-u)\right]dx\\
		&= \int_{\Omega}\left( \frac{\tilde{u}-u_0}{h}-f\right)^2dx-\frac{2}{h}\int_{\Omega}\left( \frac{\tilde{u}-u_0}{h}\tilde{u}-f\tilde{u}\right) dx\\
		&= \frac{2}{h}\int_{\mathbb{R}^N\times\mathbb{R}^N}\frac{|\tilde{u}(x)-\tilde{u}(y)|}{|x-y|^{N+s}}dxdy+\int_{\Omega}\left( \frac{\tilde{u}-u_0}{h}-f\right)^2dx \\
		&=J^s_1(\tilde{u}).
	\end{align*}
	Therefore, $u=\tilde{u}$ and the conclusion follows.
\end{proof}
\begin{remark}
	\label{rem2.1}
	{\rm (1)} We ignored the proof of the existence of weak solutions and left the details to \cite[Theorem 3.4]{MRT16} and \cite[Theorem 1.6]{BU23}.
	
	{\rm (2)} The equivalence between the minimizers of $J^s_1$ and the weak solutions to problem \eqref{1.3} could be proved easily by combining Lemmas \ref{lem2.4} and \ref{lem2.5}.
\end{remark}

\section{Proof of the main result}
\label{sec4}
In this section, we will prove the existence and uniqueness of weak solutions to problem \eqref{1.1}. 

\begin{proof}[Proof of Theorem \ref{the1.1}]
	We divide our proof into several parts. 
	
	First, we shall show the asymptotic behaviour of $\left\lbrace u^m\right\rbrace $ and $\left\lbrace Z^m\right\rbrace $, which are defined later. For a fixed number $m\in \mathbb{N}$, consider the following elliptic problem
	\begin{align*}
		\begin{cases}
			\displaystyle
			\frac{u_k-u_{k-1}}{h}+(-\Delta)^s_1u_k=[f]_{h}(x,(k-1)h)&\text{in }\Omega,\\
			u_k=0&\text{in }\mathcal{C}_\Omega,
		\end{cases}
	\end{align*}
	where $h:=\frac{T}{m}$, $k\in\left\lbrace 1, 2, \dots, m\right\rbrace $, and $[f]_h$ is the steklov average of $f$.
	
	By Lemma \ref{lem2.5}, for every $k$, there exists a weak solution $u_k\in L^2(\Omega)\cap W^{s,1}_0(\Omega)$ and a function $Z_k\in \text{sgn}(u_k(x)-u_k(y))$, such that for every $\varphi\in L^2(\Omega)\cap W^{s,1}_0(\Omega)$,
	\begin{align}
		\label{4.1}
		\int_{\Omega}\frac{u_k-u_{k-1}}{h}\varphi dx+\int_{\mathbb{R}^N\times\mathbb{R}^N}Z_k\frac{\varphi(x)-\varphi(y)}{|x-y|^{N+s}}dxdy=\int_{\Omega}[f]_h(x,(k-1)h)\varphi dx.
	\end{align}
	We take $\varphi=u_k-u_{k-1}$ as test function in \eqref{4.1} and use the Young's inequality to deduce a estimate for $u_k$, i.e.,
	\begin{align*}
		&\quad\int_{\Omega}\frac{\left( u_k-u_{k-1}\right) ^2}{h} dx+\int_{\mathbb{R}^N\times\mathbb{R}^N}\frac{|u_k(x)-u_k(y)|}{|x-y|^{N+s}}dxdy\\
		&=\int_{\Omega}[f]_h(x,(k-1)h)(u_k-u_{k-1}) dx+\int_{\mathbb{R}^N\times\mathbb{R}^N}Z_k\frac{u_{k-1}(x)-u_{k-1}(y)}{|x-y|^{N+s}}dxdy\\
		&\le \frac{h}{4}\int_{\Omega}[f]_h^2(x,(k-1)h)dx+\int_{\Omega}\frac{\left( u_k-u_{k-1}\right) ^2}{h}dx+\int_{\mathbb{R}^N\times\mathbb{R}^N}Z_k\frac{u_{k-1}(x)-u_{k-1}(y)}{|x-y|^{N+s}}dxdy\\
		&\le \frac{h}{4}\int_{\Omega}[f]_h^2(x,(k-1)h)dx+\int_{\Omega}\frac{\left( u_k-u_{k-1}\right) ^2}{h}dx+\int_{\mathbb{R}^N\times\mathbb{R}^N}\frac{|u_{k-1}(x)-u_{k-1}(y)|}{|x-y|^{N+s}}dxdy,
	\end{align*}
	which implies that
	\begin{align}
		\label{4.2}
		\begin{aligned}
			&\quad \int_{\mathbb{R}^N\times\mathbb{R}^N}\frac{|u_k(x)-u_k(y)|}{|x-y|^{N+s}}dxdy\\
			&\le \frac{h}{4}\int_{\Omega}[f]_h^2(x,(k-1)h)dx+\int_{\mathbb{R}^N\times\mathbb{R}^N}\frac{|u_{k-1}(x)-u_{k-1}(y)|}{|x-y|^{N+s}}dxdy\\
			&= \frac{h}{4}\int_{\Omega}\left( \frac{1}{h}\int_{(k-1)h}^{kh}f(x,t)dt\right)^2 dx+\int_{\mathbb{R}^N\times\mathbb{R}^N}\frac{|u_{k-1}(x)-u_{k-1}(y)|}{|x-y|^{N+s}}dxdy\\
			&\le \frac{1}{4h}\left[ \int_{(k-1)h}^{kh}\left( \int_{\Omega}f^2(x,t)dx\right)^\frac{1}{2} dt\right] ^2+\int_{\mathbb{R}^N\times\mathbb{R}^N}\frac{|u_{k-1}(x)-u_{k-1}(y)|}{|x-y|^{N+s}}dxdy\\
			&\le \frac{1}{4} \int_{(k-1)h}^{kh}\left( \int_{\Omega}f^2(x,t)dx\right) dt+\int_{\mathbb{R}^N\times\mathbb{R}^N}\frac{|u_{k-1}(x)-u_{k-1}(y)|}{|x-y|^{N+s}}dxdy.\\
		\end{aligned}
	\end{align}
	We add all inequalities \eqref{4.2} for $k=1, 2, \dots, i$ to get
	\begin{align}
		\label{4.3}
		\int_{\mathbb{R}^N\times\mathbb{R}^N}\frac{|u_i(x)-u_i(y)|}{|x-y|^{N+s}}dxdy\le \int_{\mathbb{R}^N\times\mathbb{R}^N}\frac{|u_0(x)-u_0(y)|}{|x-y|^{N+s}}dxdy+C,
	\end{align}
	where $C$ depends on $f$.
	
	Similarly, by taking $\varphi=u_k$ in \eqref{4.1}, we have
	\begin{align*}
		&\quad\int_{\Omega}\frac{u_k-u_{k-1}}{h}u_kdx\\
		&\le \int_{\Omega}[f]_h(x,(k-1)h)u_kdx\\
		&\le\int_{\Omega}\left( \varepsilon[f]^\frac{N}{s}_h(x,(k-1)h)+C(\varepsilon)u_k^\frac{N}{N-s}\right) dx \\
		&\le \varepsilon\int_{\Omega}\left( \frac{1}{h}\int_{(k-1)h}^{kh}f(x,t)dt\right)^\frac{N}{s}dx+C(\varepsilon)\int_{\Omega}u_k^\frac{N}{N-s} dx\\
		&\le \frac{\varepsilon}{h^\frac{N}{s}}\left( \int_{(k-1)h}^{kh}\left( \int_{\Omega}|f(x,t)|^\frac{N}{s}dx\right)^\frac{s}{N} dt\right)^ \frac{N}{s}+C(\varepsilon,N,s,\Omega)\left( [u_k]_{W^{s,1}_0(\Omega)}\right)^\frac{N}{N-S} \\
		&\le \frac{\varepsilon}{h^\frac{N}{s}}h^\frac{N-s}{s}\int_{(k-1)h}^{kh}\int_{\Omega}|f(x,t)|^\frac{N}{s}dxdt+C(\varepsilon,N,s,\Omega,u_0)\\
		&\le \frac{\varepsilon}{h}\int_{(k-1)h}^{kh}\int_{\Omega}|f(x,t)|^\frac{N}{s}dxdt+C(\varepsilon,N,s,\Omega,u_0),
	\end{align*}
	where a Sobolev-type inequality involving the fractional norm $\|\cdot\|_{W^{s,1}_0(\Omega)}$, the Young's inequality, H\"{o}lder's inequality and \eqref{4.3} were utilized. Hence, we have
	\begin{align}
		\label{4.4}
		\frac{1}{2}\int_{\Omega}u_k^2dx\le C(f,\varepsilon,N,s,\Omega,u_0)h+\frac{1}{2}\int_{\Omega}u_{k-1}^2dx.
	\end{align}
	By adding all inequalities \eqref{4.4} for $k=1, 2, \dots, i$, we obtain
	\begin{align*}
		\frac{1}{2}\int_{\Omega}u_i^2dx\le C(f,\varepsilon,N,s,\Omega,u_0)T+\frac{1}{2}\int_{\Omega}u_{0}^2dx.
	\end{align*}
	
	Now we take a function $Z_0\in \text{sgn}(u_0(x)-u_0(y))$ and define
	\begin{align*}
		u^m(x,t):=
		\begin{cases}
			u_0(x)&\text{if }t=0,\\
			\\
			\dfrac{t}{h}u_1(x)+\dfrac{h-t}{h}u_0(x)&\text{if }0<t\le h,\\
			\\
			\dfrac{t-h}{h}u_2(x)+\dfrac{2h-t}{h}u_1(x)&\text{if }h<t\le 2h,\\
			\\
			\cdots\cdots&\cdots\cdots, \\
			\\
			\dfrac{t-(m-1)h}{h}u_m(x)+\dfrac{mh-t}{h}u_{m-1}(x)&\text{if }(m-1)h<t\le mh=T,
		\end{cases}
	\end{align*}
and
	\begin{align*}
	Z^m(x,y,t)=
	\begin{cases}
		Z_0(x,y)&\text{if }t=0,\\
		\\
		\dfrac{t}{h}Z_1(x,y)+\dfrac{h-t}{h}Z_0(x,y)&\text{if }0<t\le h,\\
		\\
		\dfrac{t-h}{h}Z_2(x,y)+\dfrac{2h-t}{h}Z_1(x,y)&\text{if }h<t\le 2h,\\
		\\
		\cdots\cdots&\cdots\cdots, \\
		\\
		\dfrac{t-(m-1)h}{h}Z_m(x,y)+\dfrac{mh-t}{h}Z_{m-1}(x,y)&\text{if }(m-1)h<t\le mh=T.
	\end{cases}
	\end{align*}
	Thus we have
	\begin{align*}
		\int_{\mathbb{R}^N\times\mathbb{R}^N}\frac{|u^m(x,t)-u^m(y,t)|}{|x-y|^{N+s}}dxdy\le C+\int_{\mathbb{R}^N\times\mathbb{R}^N}\frac{|u_0(x)-u_0(y)|}{|x-y|^{N+s}}dxdy
	\end{align*}
	and
	\begin{align*}
		\int_{\Omega}\left( u^m(x,t)\right)^2dx\le C+ \int_{\Omega}\left( u_0(x)\right)^2dx.
	\end{align*}
	After taking the supermum over $[0,T]$, we deduce
	\begin{align}
		\label{4.9}
		\sup\limits_{0\le t\le T}\left\lbrace  \int_{\Omega}\left( u^m(x,t)\right)^2dx+\int_{\mathbb{R}^N\times\mathbb{R}^N}\frac{|u^m(x,t)-u^m(y,t)|}{|x-y|^{N+s}}dxdy\right\rbrace \le C,
	\end{align}
	where $C$ is independent of $m$.
	
	Therefore, we could choose a subsequence, still denoted by $\left\lbrace u^m\right\rbrace $, such that
	\begin{align*}
		\begin{cases}
		u^m\rightharpoonup u\quad\text{weakly-}^*\text{ in }L^\infty(0,T;L^2(\Omega)),\\
		u\in L^\infty(0,T;L^2(\Omega)\cap W^{s,1}_0(\Omega)).
		\end{cases}
	\end{align*}
	Moreover, we may find a function $Z\in L^\infty(\mathbb{R}^N\times\mathbb{R}^N\times[0,T])$, such that
	\begin{align*}
		Z^m\rightharpoonup Z\quad\text{weakly-}^*\text{ in }L^\infty(\mathbb{R}^N\times\mathbb{R}^N\times[0,T]).
	\end{align*}
	
	Next, we will prove that $u$ belongs to the space $C^{0,\frac{1}{2}}\left( [0,T];L^2(\Omega)\right) $. Define $J^{s,1}_k$ on $L^2(\Omega)\cap W^{s,1}_0(\Omega)$ as follows
	\begin{align*}
		J^{s,1}_k(u):=\frac{2}{h}\int_{\mathbb{R}^N\times\mathbb{R}^N}\frac{|u(x)-u(y)|}{|x-y|^{N+s}}dxdy+\int_{\Omega}\left( \frac{u-u_{k-1}}{h}-[f]_h(x,(k-1)h)\right)^2dx.
	\end{align*}
	From the equivalence between weak solutions and minimizers, noting that the function $u_{k-1}$ is an admissible comparison map for the minimizer $u_k$, we have
	\begin{align*}
		&\quad \frac{2}{h}\int_{\mathbb{R}^N\times\mathbb{R}^N}\frac{|u_k(x)-u_k(y)|}{|x-y|^{N+s}}dxdy+\int_{\Omega}\left( \frac{u_k-u_{k-1}}{h}-[f]_h(x,(k-1)h)\right)^2dx\\
		&\le J^{s,1}_k(u_k)\le J^{s,1}_k(u_{k-1})\\
		&=\frac{2}{h}\int_{\mathbb{R}^N\times\mathbb{R}^N}\frac{|u_{k-1}(x)-u_{k-1}(y)|}{|x-y|^{N+s}}dxdy+\int_{\Omega}\left( [f]_h(x,(k-1)h)\right)^2dx.
	\end{align*}
	Thus, the following inequality holds
	\begin{align*}
		&\quad\frac{2}{h}\int_{\mathbb{R}^N\times\mathbb{R}^N}\frac{|u_k(x)-u_k(y)|}{|x-y|^{N+s}}dxdy+\frac{1}{2}\int_{\Omega}\left( \frac{u_k-u_{k-1}}{h}\right)^2dx\\
		&\le \frac{2}{h}\int_{\mathbb{R}^N\times\mathbb{R}^N}\frac{|u_k(x)-u_k(y)|}{|x-y|^{N+s}}dxdy+\int_{\Omega}\left( \frac{u_k-u_{k-1}}{h}-[f]_h(x,(k-1)h)\right)^2dx\\
		&\quad+\int_{\Omega}\left( [f]_h(x,(k-1)h)\right)^2dx\\
		&\le \frac{2}{h}\int_{\mathbb{R}^N\times\mathbb{R}^N}\frac{|u_{k-1}(x)-u_{k-1}(y)|}{|x-y|^{N+s}}dxdy+2\int_{\Omega}\left( [f]_h(x,(k-1)h)\right)^2dx.
	\end{align*}
	Adding the previous inequalities for $k=1, 2, \dots, m$, one can show that
	\begin{align*}
		&\quad\frac{2}{h}\int_{\mathbb{R}^N\times\mathbb{R}^N}\frac{|u_m(x)-u_m(y)|}{|x-y|^{N+s}}dxdy+\frac{1}{2}\sum_{k=1}^{m}\int_{\Omega}\left( \frac{u_k-u_{k-1}}{h}\right)^2dx\\
		&\le \frac{2}{h}\int_{\mathbb{R}^N\times\mathbb{R}^N}\frac{|u_0(x)-u_0(y)|}{|x-y|^{N+s}}dxdy+2\sum_{k=1}^{m}\int_{\Omega}\left( [f]_h(x,(k-1)h)\right)^2dx.
	\end{align*}
	By the uniformly boundedness of energy, we have
	\begin{align*}
		&\quad\int_{0}^{T}\int_{\Omega}\left| \partial_tu^m\right|^2dxdt\\
		&=\sum_{k=1}^{m}\int_{(k-1)h}^{kh}\int_{\Omega}\frac{1}{h^2}|u_{k}-u_{k-1}|^2dxdt\\
		& =\frac{1}{h}\sum_{k=1}^{m}\int_{\Omega}|u_{k}-u_{k-1}|^2dxdt\\
		&\le 4\left( [u^m(0)]_{W^{s,1}_0(\Omega)}+[u^m(T)]_{W^{s,1}_0(\Omega)}\right) +4h\sum_{k=1}^{m}\int_{\Omega}\left( \frac{1}{h}\int_{(k-1)h}^{kh}f(x,t)dt\right)^2dx \\
		&\le C(N,s,f,u_0,\Omega)+\frac{4}{h}\sum_{k=1}^{m}\int_{\Omega}\left( \int_{(k-1)h}^{kh}f(x,t)dt\right)^2dx\\
		&\le C(N,s,f,u_0,\Omega)+\frac{4}{h}\sum_{k=1}^{m}\left( \int_{(k-1)h}^{kh}\left( \int_{\Omega}f^2(x,t)dx\right)^\frac{1}{2}dt \right)^2\\
		&\le C(N,s,f,u_0,\Omega)+\frac{4}{h}\sum_{k=1}^{m}\left( \left( \int_{(k-1)h}^{kh}\int_{\Omega}f^2(x,t)dxdt\right)^\frac{1}{2} h^\frac{1}{2}\right)  ^2\\
		&=C(N,s,f,u_0,\Omega)+4\|f\|^2_{L^2(\Omega_T)}.
	\end{align*}
	Hence, $\left\lbrace u^m\right\rbrace $ is bounded in $W^{1,2}(0,T;L^2(\Omega))$. By Rellich's theorem, we conclude that there exists a subsequence, without loss of generality, still denoted by $u$ the limit of this subsequence,
	\begin{align*}
		\begin{cases}
			u^m\rightarrow u\quad\text{strongly in } L^2(\Omega_T),\\
			\partial_tu^m\rightharpoonup \partial_tu\quad\text{weakly}\text{ in }L^2(\Omega_T).
		\end{cases}
	\end{align*}
	In particular, noting that for any $t_1,t_2\in[0,T]$, there holds
	\begin{align*}
		\displaystyle
		\frac{\|u(t_1)-u(t_2)\|_{L^2(\Omega)}}{(t_1-t_2)^\frac{1}{2}}&=\frac{\left( \int_{\Omega}\left( u(t_1)-u(t_2)\right)^2 dx\right)^\frac{1}{2}}{(t_1-t_2)^\frac{1}{2}}\\
		&\le \frac{\left( \int_{\Omega}\left( \int_{t_2}^{t_1}\partial_tudt\right)^2 dx\right)^\frac{1}{2}}{(t_1-t_2)^\frac{1}{2}} \\
		&\le \frac{\int_{t_1}^{t_2}\left( \int_{\Omega}|\partial_tu|^2dx\right)^\frac{1}{2}dt}{(t_1-t_2)^\frac{1}{2}} \\
		&\le \left( \int_{t_1}^{t_2}\int_{\Omega}|\partial_tu|^2dxdt\right)^\frac{1}{2} ,
	\end{align*}
	which means that $u\in C^{0,\frac{1}{2}}\left( [0,T];L^2(\Omega)\right) $.
	
	Next, we will show that $u$ and $Z$ satisfy (1) in Definition \ref{def1.1}. 
	
	For each $\varphi\in C^1\left( [0,T];L^2(\Omega)\right) \cap C([0,T];W^{s,1}_0(\Omega))$ with $\varphi(\cdot,T)=0$, we take $\varphi(x,kh)$ as a test function in \eqref{4.1} for every $k\in\left\lbrace 1, 2, \dots, m\right\rbrace $ to get
	\begin{align}
		\label{4.5}
		\begin{aligned}
			&\quad\int_{\Omega}\frac{u_k(x)-u_{k-1}(x)}{h}\varphi(x,kh)dx+\int_{\mathbb{R}^N\times\mathbb{R}^N}Z_k\frac{\varphi(x,kh)-\varphi(y,kh)}{|x-y|^{N+s}}dxdy\\
			&=\int_{\Omega}[f]_h(x,(k-1)h)\varphi(x,kh)dx.
		\end{aligned}
	\end{align}
	For any $t\in (h,T)$, there exists $k\in\left\lbrace 2,3,\dots,m\right\rbrace $, such that $(k-1)h<t\le kh$. Thus, we have
	\begin{align}
		\label{4.6}
		\begin{aligned}
			&\quad\int_{\Omega} \frac{kh-t}{h}\frac{u_{k-1}(x)-u_{k-2}(x)}{h}\varphi(x,(k-1)h)dx\\
			&\quad+\int_{\Omega}\frac{t-(k-1)h}{h}\frac{u_k(x)-u_{k-1}(x)}{h}\varphi(x,kh) dx\\
			&\quad+\int_{\mathbb{R}^N\times\mathbb{R}^N} \frac{kh-t}{h}Z_{k-1}\frac{\varphi(x,(k-1)h)-\varphi(y,(k-1)h)}{|x-y|^{N+s}}dxdy\\
			&\quad+\int_{\mathbb{R}^N\times\mathbb{R}^N}\frac{t-(k-1)h}{h}Z_k\frac{\varphi(x,kh)-\varphi(y,kh)}{|x-y|^{N+s}}dxdy \\
			&=\int_{\Omega} \frac{kh-t}{h}[f]_h(x,(k-2)h)\varphi(x,(k-1)h)dx\\
			&\quad+\int_{\Omega}\frac{t-(k-1)h}{h}[f]_h(x,(k-1)h)\varphi(x,kh) dx.
		\end{aligned}
	\end{align}
That is, by definitions of $u^m$ and $Z^m$,
	\begin{align}
		\label{4.7}
		\begin{aligned}
			&\quad\int_{\Omega} \frac{u^m(x,t)-u^m(x,(t-h))}{h}\varphi(x,(k-1)h)dx\\
			&\quad+\int_{\Omega}\frac{kh-t}{h}\frac{u_k-u_{k-1}}{h}\left( \varphi(x,kh)-\varphi(x,(k-1)h)\right)  dx\\
			&\quad+\int_{\mathbb{R}^N\times\mathbb{R}^N} Z^m\frac{\varphi(x,(k-1)h)-\varphi(y,(k-1)h)}{|x-y|^{N+s}}dxdy\\
			&\quad+\int_{\mathbb{R}^N\times\mathbb{R}^N}\frac{kh-t}{h}Z_k \frac{\varphi(x,kh)-\varphi(y,kh)}{|x-y|^{N+s}} dxdy\\
			&\quad-\int_{\mathbb{R}^N\times\mathbb{R}^N}\frac{kh-t}{h}Z_k\frac{\varphi(x,(k-1)h)-\varphi(y,(k-1)h)}{|x-y|^{N+s}} dxdy\\
			&=\int_{\Omega} \frac{t-(k-1)h}{h}[f]_h(x,(k-2)h)\varphi(x,(k-1)h)dx\\
			&\quad+\int_{\Omega}\frac{kh-t}{h}[f]_h(x,(k-1)h)\varphi(x,kh) dx,
		\end{aligned}
	\end{align}
	where $k$ depends on $t$. By integrating \eqref{4.7} with respect to $t$ from $h$ to $T$, for the first term on the left-hand side of \eqref{4.7}, we get
	\begin{align}
		\label{4.11}
		\begin{aligned}
			&\quad\int_{h}^{T}\int_{\Omega}\frac{u^m(x,t)-u^m(x,(t-h))}{h}\varphi(x,(k-1)h)dxdt\\
			&=\int_{h}^{T}\int_{\Omega}\frac{u^m(x,t)}{h}\varphi(x,(k-1)h)dxdt-\int_{0}^{T-h}\int_{\Omega}\frac{u^m(x,t)}{h}\varphi(x,kh)dxdt\\
			&=\int_{h}^{T-h}\int_{\Omega}u^m(x,t)\frac{\varphi(x,(k-1)h)-\varphi(x,kh)}{h}dxdt\\
			&\quad+\int_{T-h}^{T}\int_{\Omega}\frac{u^m(x,t)}{h}\varphi(x,(m-1)h)dxdt-\int_{0}^{h}\int_{\Omega}\frac{u^m(x,t)}{h}\varphi(x,0)dxdt.
		\end{aligned}
	\end{align}
	Noting that
	\begin{align*}
		&\quad\int_{T-h}^{T}\int_{\Omega}\frac{u^m(x,t)}{h}\varphi(x,(m-1)h)dxdt\\
		&=\int_{T-h}^{T}\int_{\Omega}\left( \dfrac{t-(m-1)h}{h}u_m(x)+\dfrac{mh-t}{h}u_{m-1}(x)\right) \dfrac{\varphi(x,(m-1)h)}{h}dxdt\\
		&=\frac{1}{2}\int_{\Omega}u_m(x)\varphi(x,(m-1)h)dx+\frac{1}{2}\int_{\Omega}u_{m-1}(x)\varphi(x,(m-1)h)dx\\
		&=\frac{1}{2}\int_{\Omega}u^m(x,T)\varphi(x,(m-1)h)dx+\frac{1}{2}\int_{\Omega}u^m(x,(m-1)h)\varphi(x,(m-1)h)dx
	\end{align*}
	and
	\begin{align*}
		&\quad\int_{0}^{h}\int_{\Omega}\frac{u^m(x,t)}{h}\varphi(x,0)dxdt\\
		&=\int_{0}^{h}\int_{\Omega}\left( \dfrac{t}{h}u_1(x)+\dfrac{h-t}{h}u_0(x)\right) \frac{\varphi(x,0)}{h}dxdt\\
		&=\frac{1}{2}\int_{\Omega}u_0(x)\varphi(x,0)dx+\frac{1}{2}\int_{\Omega}u_{1}(x)\varphi(x,0)dx,
	\end{align*}
	we have
	\begin{align}
		\label{4.12}
		\begin{aligned}
			&\quad\lim\limits_{m\rightarrow+\infty}\bigg|\int_{T-h}^{T}\int_{\Omega}\frac{u^m(x,t)}{h}\varphi(x,(m-1)h)dxdt\bigg|\\
			&=\lim\limits_{m\rightarrow+\infty}\bigg| \frac{1}{2}\int_{\Omega}\left( u^m(x,T)+u^m(x,(m-1)h)\right) \varphi(x,(m-1)h)dx\bigg| \\
			&\le\lim\limits_{m\rightarrow+\infty}\frac{1}{2}\left( \|u^m(x,T)\|_{L^2(\Omega)}+\|u^m(x,T-h)\|_{L^2(\Omega)}\right) \|\varphi(x,(m-1)h)\|_{L^2(\Omega)}\\
			&\le\lim\limits_{m\rightarrow+\infty}C(u_0,f,N,s,\Omega)\|\varphi(x,(m-1)h)-\varphi(x,T)\|_{L^2(\Omega)}\\
			&\le \lim\limits_{m\rightarrow+\infty}C(u_0,f,N,s,\Omega)\max\limits_{T-h\le s\le T}\|\varphi_t(x,s)\|_{L^2(\Omega)}\frac{T}{m}\\
			&=0
		\end{aligned}
	\end{align}
	and
	\begin{align}
		\label{4.13}
		\begin{aligned}
			&\quad\lim\limits_{m\rightarrow+\infty}\bigg|\int_{0}^{h}\int_{\Omega}\frac{u^m(x,t)}{h}\varphi(x,0)dxdt-\int_{\Omega}u_0(x)\varphi(x,0)dx\bigg|\\
			&=\lim\limits_{m\rightarrow+\infty}\bigg|\frac{1}{2}\int_{\Omega}u_{1}(x)\varphi(x,0)dxdt-\frac{1}{2}\int_{\Omega}u_0(x)\varphi(x,0)dx\bigg|\\
			&\le \lim\limits_{m\rightarrow+\infty}\frac{1}{2}\|u_1(x)-u_0(x)\|_{L^2(\Omega)}\|\varphi(x,0)\|_{L^2(\Omega)}\\
			&\le \lim\limits_{m\rightarrow+\infty}C(u_0,f,s,N,\Omega)h^\frac{1}{2}\|\varphi(x,0)\|_{L^2(\Omega)}\\
			&=0.
		\end{aligned}
	\end{align}
	Observing that
	\begin{align}
		\label{4.14}
		\begin{aligned}
			&\quad\lim\limits_{m\rightarrow+\infty}\bigg|\int_{h}^{T-h}\int_{\Omega}u^m(x,t)\frac{\varphi(x,(k-1)h)-\varphi(x,kh)}{h}dxdt-\int_{0}^{T}\int_{\Omega}u(x,t)\varphi_t(x,t)dxdt\bigg|\\
			&\le \lim\limits_{m\rightarrow+\infty}\bigg| \int_{h}^{T-h}\int_{\Omega}\left( u^m(x,t)\frac{\varphi(x,(k-1)h)-\varphi(x,kh)}{h}-u^m(x,t)\varphi_t(x,t)\right)dxdt \bigg|\\
			&\quad+\lim\limits_{m\rightarrow+\infty}\bigg|\int_{h}^{T-h}\int_{\Omega}u^m(x,t)\varphi_t(x,t)dxdt-\int_{0}^{T}\int_{\Omega}u^m(x,t)\varphi_t(x,t)dxdt\bigg|\\
			&\quad+\lim\limits_{m\rightarrow+\infty}\bigg|\int_{0}^{T}\int_{\Omega}\left( u^m(x,t)-u(x,t)\right) \varphi_t(x,t)dxdt\bigg|\\
			&\le \lim\limits_{m\rightarrow+\infty}\int_{h}^{T-h}\int_{\Omega}|u^m(x,t)|\bigg|\frac{\varphi(x,(k-1)h)-\varphi(x,kh)}{h}-\varphi_t(x,t)\bigg|dxdt\\
			&\quad+\lim\limits_{m\rightarrow+\infty}\left( \int_{0}^{h}\int_{\Omega}|u^m(x,t)\varphi_t(x,t)|dxdt+\int_{T-h}^{T}\int_{\Omega}|u^m(x,t)\varphi_t(x,t)|dxdt\right) \\
			&\le \lim\limits_{m\rightarrow+\infty}\int_{0}^{T}\|u^m(x,t)\|_{L^2(\Omega)}\max\limits_{(k-1)h\le \tau\le kh}\|\varphi_t(x,\tau)-\varphi_t(x,t)\|_{L^2(\Omega)}dt\\
			&\quad+\lim\limits_{m\rightarrow+\infty}2\|u^m\|_{L^\infty(0,T;L^2(\Omega))}\|\varphi_t\|_{L^\infty(0,T;L^2(\Omega))}h\\
			&=0,
		\end{aligned}
	\end{align}
	by letting $m$ goes to $+\infty$ and bringing \eqref{4.12}--\eqref{4.14} into \eqref{4.11} one has
	\begin{align}
		\label{4.15}
		\begin{aligned}
			&\quad\lim\limits_{m\rightarrow+\infty}\int_{h}^{T}\int_{\Omega}\frac{u^m(x,t)-u^m(x,(t-h))}{h}\varphi(x,(k-1)h)dxdt\\
			&=-\int_{0}^{T}\int_{\Omega}u(x,t)\varphi_t(x,t)dxdt-\int_{\Omega}u_0(x)\varphi(x,0)dx.
		\end{aligned}
	\end{align}
	 
	For the second term on the left-hand side of \eqref{4.7}, from the boundedness of $\left\lbrace u^m\right\rbrace $ in the space $C^{0,\frac{1}{2}}\left( [0,T];L^2(\Omega)\right) $, we have
	\begin{align}
		\label{4.16}
		\begin{aligned}
			&\quad\lim\limits_{m\rightarrow+\infty}\bigg|\int_{h}^{T}\int_{\Omega}\frac{kh-t}{h}\frac{u_k-u_{k-1}}{h}\left( \varphi(x,kh)-\varphi(x,(k-1)h)\right)  dxdt\bigg|\\
			&\le \lim\limits_{m\rightarrow+\infty}\bigg|\int_{h}^{T}\int_{\Omega}\frac{u_k-u_{k-1}}{h}\left( \varphi(x,kh)-\varphi(x,(k-1)h)\right)  dxdt\bigg|\\
			&\le \lim\limits_{m\rightarrow+\infty}\int_{h}^{T}\bigg\|\frac{u_k-u_{k-1}}{h}\bigg\|_{L^2(\Omega)}\|\varphi(x,kh)-\varphi(x,(k-1)h)\|_{L^2(\Omega)}dt\\
			&\le \lim\limits_{m\rightarrow+\infty}\int_{h}^{T}C(u_0,f,s,N,\Omega)h^\frac{1}{2}\max\limits_{h\le \tau\le T}\|\varphi_t(x,\tau)\|_{L^2(\Omega)}dt\\
			&\le \lim\limits_{m\rightarrow+\infty}C(u_0,f,s,N,\Omega,\varphi,T)h^\frac{1}{2}\\
			&=0.
		\end{aligned}
	\end{align}
	
	Similarly, for the third term on the left-hand side of \eqref{4.7}, we deduce that
	\begin{align*}
		&\quad\lim\limits_{m\rightarrow+\infty}\bigg|\int_{h}^{T}\int_{\mathbb{R}^N\times\mathbb{R}^N} Z^m\frac{\varphi(x,(k-1)h)-\varphi(y,(k-1)h)}{|x-y|^{N+s}}dxdydt\\
		&\qquad\qquad\quad-\int_{0}^{T}\int_{\mathbb{R}^N\times\mathbb{R}^N}Z\frac{\varphi(x,t)-\varphi(y,t)}{|x-y|^{N+s}} dxdydt\bigg|\\
		&\le \lim\limits_{m\rightarrow+\infty}\bigg|\int_{h}^{T}\int_{\mathbb{R}^N\times\mathbb{R}^N} Z^m\frac{\varphi(x,(k-1)h)-\varphi(y,(k-1)h)}{|x-y|^{N+s}}dxdydt\\
		&\qquad\qquad\quad-\int_{0}^{T}\int_{\mathbb{R}^N\times\mathbb{R}^N}Z^m\frac{\varphi(x,t)-\varphi(y,t)}{|x-y|^{N+s}} dxdydt\bigg|\\
		&\quad+\lim\limits_{m\rightarrow+\infty}\bigg|\int_{0}^{T}\int_{\mathbb{R}^N\times\mathbb{R}^N}\left( Z^m-Z\right) \frac{\varphi(x,t)-\varphi(y,t)}{|x-y|^{N+s}} dxdydt\bigg|\\
		&\le \lim\limits_{m\rightarrow+\infty}\int_{0}^{T}\int_{\mathbb{R}^N\times\mathbb{R}^N}\bigg|\frac{\varphi(x,(k-1)h)-\varphi(y,(k-1)h)}{|x-y|^{N+s}}-\frac{\varphi(x,t)-\varphi(y,t)}{|x-y|^{N+s}}\bigg|dxdydt\\
		&=0
	\end{align*}
	and for the right-hand side of \eqref{4.7}, we find
	\begin{align*}
		&\quad \lim\limits_{m\rightarrow+\infty}\int_{h}^{T}\int_{\Omega} \frac{t-(k-1)h}{h}[f]_h(x,(k-2)h)\varphi(x,(k-1)h)dxdt\\
		&\quad+\lim\limits_{m\rightarrow+\infty}\int_{h}^{T}\int_{\Omega}\frac{kh-t}{h}[f]_h(x,(k-1)h)\varphi(x,kh) dxdt\\
		&=\lim\limits_{m\rightarrow+\infty}\int_{h}^{T-h}\int_{\Omega}[f]_h(x,(k-1)h)\varphi(x,kh)dxdt+\lim\limits_{m\rightarrow+\infty}\int_{0}^{h}\int_{\Omega}\frac{t}{h}[f]_h(x,0)\varphi(x,h)dxdt\\
		&\quad +\lim\limits_{m\rightarrow+\infty}\int_{T-h}^{T}\int_{\Omega}\frac{T-t}{h}[f]_h(x,T-h)\varphi(x,T)dxdt.
 	\end{align*}
	Due to
	\begin{align*}
		&\quad\lim\limits_{m\rightarrow+\infty}\bigg|\int_{h}^{T-h}\int_{\Omega}[f]_h(x,(k-1)h)\varphi(x,kh)dxdt-\int_{0}^{T}\int_{\Omega}f\varphi dxdt\bigg|\\
		&\le \lim\limits_{m\rightarrow+\infty}\bigg|\int_{h}^{T-h}\int_{\Omega}[f]_h(x,(k-1)h)\varphi(x,kh)dxdt-\int_{0}^{T}\int_{\Omega}f\varphi(x,kh)dxdt\bigg|\\
		&\quad+\lim\limits_{m\rightarrow+\infty}\bigg|\int_{0}^{T}\int_{\Omega}f\varphi(x,kh)\varphi dxdt-\int_{0}^{T}\int_{\Omega}f\varphi dxdt\bigg|\\
		&=0
	\end{align*}
	and
	\begin{align*}
		&\quad\lim\limits_{m\rightarrow+\infty}\int_{0}^{h}\int_{\Omega}\frac{t}{h}[f]_h(x,0)\varphi(x,h)dxdt+\lim\limits_{m\rightarrow+\infty}\int_{T-h}^{T}\int_{\Omega}\frac{T-t}{h}[f]_h(x,T-h)\varphi(x,T)dxdt\\
		&=\lim\limits_{m\rightarrow+\infty}\frac{h}{2}\int_{\Omega}\frac{1}{h}\int_{0}^{h}f(x,\tau)d\tau\varphi(x,h)dx+\lim\limits_{m\rightarrow+\infty}\frac{h}{2}\int_{\Omega}\frac{1}{h}\int_{T-h}^{T}f(x,\tau)d\tau\varphi(x,T)dx\\
		&=\frac{1}{2}\lim\limits_{m\rightarrow+\infty}\int_{0}^{h}\int_{\Omega}f(x,t)\varphi(x,h)dxdt\\
		&=0,
	\end{align*}
	one has
	\begin{align}
		\label{4.17}
		\begin{aligned}
			&\quad\lim\limits_{m\rightarrow+\infty}\int_{h}^{T}\int_{\mathbb{R}^N\times\mathbb{R}^N} Z^m\frac{\varphi(x,(k-1)h)-\varphi(y,(k-1)h)}{|x-y|^{N+s}}dxdydt\\
			&=\int_{0}^{T}\int_{\mathbb{R}^N\times\mathbb{R}^N}Z\frac{\varphi(x,t)-\varphi(y,t)}{|x-y|^{N+s}} dxdydt
		\end{aligned}
	\end{align}
	and
	\begin{align}
		\label{4.18}
		\begin{aligned}
			&\quad \lim\limits_{m\rightarrow+\infty}\int_{h}^{T}\int_{\Omega} \frac{t-(k-1)h}{h}[f]_h(x,(k-2)h)\varphi(x,(k-1)h)dxdt\\
			&\quad+\lim\limits_{m\rightarrow+\infty}\int_{h}^{T}\int_{\Omega}\frac{kh-t}{h}[f]_h(x,(k-1)h)\varphi(x,kh) dxdt\\
			&=\int_{0}^{T}\int_{\Omega}f\varphi dxdt.
		\end{aligned}
	\end{align}
	Combining \eqref{4.15}--\eqref{4.18}, we get
	\begin{align*}
		\begin{aligned}
			&\quad-\int_{\Omega}u_0\varphi(x,0)dx-\int_{0}^{T}\int_{\Omega}u\varphi_tdxdt+\int_{0}^{T}\int_{\mathbb{R}^N\times\mathbb{R}^N}Z\frac{\varphi(x,t)-\varphi(y,t)}{|x-y|^{N+s}}dxdydt\\
			&=\int_{0}^{T}\int_{\Omega}f\varphi dxdt.
		\end{aligned}
	\end{align*}
	
	To conclude the proof, it remains to show that $Z(x,y,t)\in \text{sgn}\left( u(x,t)-u(y,t)\right) $, i.e., for almost all $t\in[0,T]$ and $(x,y)\in\mathbb{R}^N\times\mathbb{R}^N$, $Z(x,y,t)=\frac{u(x,t)-u(y,t)}{|u(x,t)-u(y,t)|}$ in the domain $\left\lbrace u(x,t)\neq u(y,t)\right\rbrace $ and $\|Z\|_{L^\infty(\mathbb{R}^N\times\mathbb{R}^N\times[0,T])}\le1$.
	
	Choosing $u$ as a test function in \eqref{1.4} to deduce that
	\begin{align}
		\label{4.19}
		\begin{aligned}
			&\quad\frac{1}{2}\|u(T)\|^2_{L^2(\Omega)}-\frac{1}{2}\|u_0\|^2_{L^2(\Omega)}+\int_{0}^{T}\int_{\mathbb{R}^N\times\mathbb{R}^N}Z\frac{u(x,t)-u(y,t)}{|x-y|^{N+s}}dxdydt\\
			&=\int_{0}^{T}\int_{\Omega}fudxdt.
		\end{aligned}
	\end{align}
	Moreover, by choosing $\varphi=u_k$ in \eqref{4.1}, we obtain
	\begin{align*}
		\int_{\Omega}\frac{u_k-u_{k-1}}{h}u_kdx+\int_{\mathbb{R}^N\times\mathbb{R}^N}\frac{|u_k(x)-u_k(y)|}{|x-y|^{N+s}}dxdy=\int_{\Omega}[f]_h(x,(k-1)h)u_kdx.
	\end{align*}
	Thus, for any $t\in [(k-1)h,kh]$, we have
	\begin{align}
		\label{4.20}
		\begin{aligned}
			&\quad\frac{t-(k-1)h}{h}\left[ \int_{\Omega}\frac{u_k-u_{k-1}}{h}u_kdx+\int_{\mathbb{R}^N\times\mathbb{R}^N}\frac{|u_k(x)-u_k(y)|}{|x-y|^{N+s}}dxdy\right]\\
			&\quad+\frac{kh-t}{h}\left[ \int_{\Omega}\frac{u_{k-1}-u_{k-2}}{h}u_{k-1}dx+\int_{\mathbb{R}^N\times\mathbb{R}^N}\frac{|u_{k-1}(x)-u_{k-1}(y)|}{|x-y|^{N+s}}dxdy\right]\\
			&=\int_{\Omega}\left[ \frac{t-(k-1)h}{h}[f]_h(x,(k-2)h)u_{k-1}+\frac{kh-t}{h}[f]_h(x,(k-1)h)u_k\right] dx.
		\end{aligned}
	\end{align}
	Integrating \eqref{4.20} with respect to $t$ from $(k-1)h$ to $kh$ to obtain
	\begin{align}
		\label{4.21}
		\begin{aligned}
			&\quad\frac{1}{2}\int_{\Omega}(u_k-u_{k-1})u_kdx+\frac{t-(k-1)h}{h}\int_{(k-1)h}^{kh}\int_{\mathbb{R}^N\times\mathbb{R}^N}\frac{|u_k(x)-u_k(y)|}{|x-y|^{N+s}}dxdydt\\
			&\quad+\frac{1}{2}\int_{\Omega}(u_{k-1}-u_{k-2})u_{k-1}dx\\
			&\quad+\frac{kh-t}{h}\int_{(k-1)h}^{kh}\int_{\mathbb{R}^N\times\mathbb{R}^N}\frac{|u_{k-1}(x)-u_{k-1}(y)|}{|x-y|^{N+s}}dxdydt\\
			&=\int_{(k-1)}^{kh}\int_{\Omega}\left[ \frac{t-(k-1)h}{h}[f]_h(x,(k-2)h)u_{k-1}+\frac{kh-t}{h}[f]_h(x,(k-1)h)u_k\right] dxdt.
		\end{aligned}
	\end{align}
	By adding all the equation \eqref{4.21} for $k\in\left\lbrace 2,3,\dots,m\right\rbrace $, from Young's inequality and definition of $u^m$, we have
	\begin{align*}
		&\quad\frac{1}{4}\int_{\Omega}(u_m^2+u_{m-1}^2)dx+\int_{h}^{T}\int_{\mathbb{R}^N\times\mathbb{R}^N}\frac{|u^m(x,t)-u^m(y,t)|}{|x-y|^{N+s}}dxdydt\\
		&\le \sum_{k=2}^{m}\int_{(k-1)}^{kh}\int_{\Omega}\left[ \frac{t-(k-1)h}{h}[f]_h(x,(k-2)h)u_{k-1}+\frac{kh-t}{h}[f]_h(x,(k-1)h)u_k\right] dxdt\\
		&\quad+\frac{1}{4}\int_{\Omega}(u_1^2+u_0^2)dx.
	\end{align*}
	Letting $m$ goes to $+\infty$ and taking the fact that $u^m\rightarrow u$ in $C\left( [0,T];L^2(\Omega)\right) $ into consideration, we arrive at
	\begin{align}
		\label{4.22}
		\begin{aligned}
			&\quad\frac{1}{2}\int_{\Omega}u(T)^2dx+\int_{0}^{T}\int_{\mathbb{R}^N\times\mathbb{R}^N}\frac{|u(x,t)-u(y,t)|}{|x-y|^{N+s}}dxdydt\\
			&\le \int_{0}^{T}\int_{\mathbb{R}^N\times\mathbb{R}^N}fudx+\frac{1}{2}\int_{\Omega}u_0^2dx.
		\end{aligned}
	\end{align}
	Combining \eqref{4.22} and \eqref{4.19}, there holds
	\begin{align*}
		\int_{0}^{T}\int_{\mathbb{R}^N\times\mathbb{R}^N}\frac{|u(x,t)-u(y,t)|}{|x-y|^{N+s}}dxdydt\le\int_{0}^{T}\int_{\mathbb{R}^N\times\mathbb{R}^N}Z\frac{u(x,t)-u(y,t)}{|x-y|^{N+s}}dxdydt,
	\end{align*}
	which implies $Z(x,y,t)\in \text{sgn}\left( u(x,t)-u(y,t)\right) $.
\end{proof}
\begin{remark}
	By an approximation argument, we may take $\varphi=u-v$ in the formulation
	\begin{align*}
		&\quad -\int_{0}^{T}\int_{\Omega}(u-v)\varphi_tdxdt+\int_{\Omega}\left( u(T)-v(T)\right) \varphi(x,T)dxdt\\
		&=-\int_{0}^{T}\int_{\mathbb{R}^N\times\mathbb{R}^N} \left( Z_u-Z_v\right) \frac{\varphi(x,t)-\varphi(y,t)}{|x-y|^{N+s}}dxdydt
	\end{align*}
	to deduce
	\begin{align*}
		0&\le \frac{1}{2}\int_{\Omega}\left( u(T)-v(T)\right)^2dx\\
		&=-\int_{0}^{T}\int_{\mathbb{R}^N\times\mathbb{R}^N} \left( Z_u-Z_v\right) \frac{u(x,t)-v(x,t)-u(y,t)+v(y,t)}{|x-y|^{N+s}}dxdydt\le 0,
	\end{align*}
	where $Z_u\in\mathrm{sgn}\left( u(x,t)-u(y,t)\right) $ and $Z_v\in\mathrm{sgn}\left( v(x,t)-v(y,t)\right)$. The uniqueness of weak solutions to problem \eqref{1.1} follows immediately.
\end{remark}

	\medskip
	
\subsection*{Data availability}
No data was used for the research described in the article.
	
\subsection*{Conflict of interest} The authors have no non-financial Conflict of interest to declare that
are relevant to the consent of this article.

\subsection*{Acknowledgments}
This work was supported by the National Natural Science Foundation of China (No. 12071098) and the Fundamental Research Funds for the Central Universities (No. 2022FRFK060022).

\end{document}